\numberwithin{equation}{section}
\DeclarePairedDelimiter\abs{\lvert}{\rvert}%
\DeclarePairedDelimiter\norm{\|}{\|}%
\let\oldabs\abs
\def\abs{\@ifstar{\oldabs}{\oldabs*}}
\let\oldnorm\norm
\def\norm{\@ifstar{\oldnorm}{\oldnorm*}}
\theoremstyle{plain}
\newtheorem{thm}{Theorem}[section]
\newtheorem*{remark}{Remark}
\newtheorem{lem}[thm]{Lemma}
\newtheorem{prop}[thm]{Proposition}
\newtheorem*{con}{Conditions 2.1}
\theoremstyle{definition}
\newtheorem*{defn}{Sparse Balls}
\newcommand\R{\mathbb{R}}
\newcommand{\vphi}{\varphi}
\newcommand{\les}{\lesssim}
\newcommand{\ben}{\begin{enumerate}[(i)]}
\newcommand{\een}{\end{enumerate}}
\newcommand{\minab}[2]{\min_{\substack{ {#1} \\ {#2} }}}
   \def\MR#1{}
\begin{document}
\title{Some remarks on Fourier restriction estimates}
\author{Jongchon Kim}
\subjclass[2000]{42B10}
\thanks{Research supported in part by the National Science Foundation.}
\address{Department of Mathematics, University of Wisconsin-Madison, Madison, WI 53706 USA}
\email{jkim@math.wisc.edu}
\begin{abstract}
We provide $L^p\to L^q$ refinements on some Fourier restriction estimates obtained using polynomial partitioning. Let $S\subset \R^3$ be a compact $C^\infty$ surface with strictly positive second fundamental form. We derive sharp $L^p(S) \to L^q(\R^3)$ estimates for the associated Fourier extension operator for $q> 3.25$ and $q\geq 2p'$ from an estimate of Guth that was used to obtain $L^\infty(S) \to L^q(\R^3)$ bounds for $q>3.25$. We present a slightly weaker result when $S$ is the hyperbolic paraboloid in $\R^3$ based on the work of Cho and Lee. Finally, we give some refinements for the truncated paraboloid in higher dimensions.
\end{abstract}
\maketitle

\section{Introduction}
Let $S\subset \R^d$ be a compact $C^\infty$ hypersurface. The Fourier transform of a function $f\in L^1(\R^d)$ is continuous, hence the restriction operator $R_S f = \hat{f}|_S$ is well-defined on $L^1(\R^d)$. However, it is impossible to restrict $\hat{f}$ to a set of zero Lebesgue measure for $f\in L^2(\R^d)$ since $\hat{f}$ is merely in $L^2(\R^d)$ in general. In the late 1960's, Stein observed that the restriction operator $R_S$ may still be defined on $L^p(\R^d)$ for some $1<p<2$ provided that the surface $S$ is appropriately curved; see \cite{Fefferman} and \cite{Stein}. This type of results have been obtained from a priori restriction estimates
\[ \norm{ \hat{f}|_S }_{L^{q}(S,d\sigma)} \leq C \norm{f}_{L^{p}(\R^d)}, \]
where $d\sigma$ is the induced Lebesgue measure on $S$.

However, for a given hypersurface $S$, it is a difficult problem to determine optimal ranges of exponents $p$ and $q$. By duality, one may reformulate restriction estimates as extension estimates
\begin{equation}\label{eqn:ext}
\norm{E_S f}_{L^q(\R^d)} \leq C \norm{f}_{L^{p}(S)},
\end{equation}
where $E_S$ is the extension operator
\[ E_S f (x) = \int_S e^{2\pi i x\cdot \xi} f(\xi) d\sigma(\xi). \]
When $S$ is the sphere $S^{d-1}$, or more generally a compact $C^\infty$ hypersurface with nonvanishing Gaussian curvature, it is conjectured that \eqref{eqn:ext} holds if and only if $q>\frac{2d}{d-1}$ and $q\geq \frac{d+1}{d-1} p'$, where $p'=p/(p-1)$. This conjecture is related to many other conjectures, including the Bochner-Riesz and the Kakeya conjectures; see, for instance, \cite{Fefferman}, \cite{Bourgain} and \cite{Tao}. While many deep results have been obtained on the restriction conjecture, it remains open in the full $p,q$ range for $d\geq 3$. 

Recently, Guth \cite{Guth} made further progress on this problem for positively curved surfaces in $\R^3$ using polynomial partitioning, a divide and conquer technique developed by Guth-Katz \cite{GK} for the Erd\H{o}s distinct distances problem. 

\begin{thm}[Guth]\label{thm:Guth}
If $S\subset \R^3$ is a compact $C^\infty$ surface with strictly positive second fundamental form, then for all $q>3.25$,
\[ \norm{E_S f}_{L^q(\R^3)} \leq C \norm{f}_{L^\infty(S)}. \]
\end{thm}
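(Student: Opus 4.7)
The plan is to follow Guth's polynomial partitioning strategy, which combines a physical-space divide-and-conquer step with the Bennett-Carbery-Tao trilinear restriction theorem and induction on the radius. By a smooth partition of unity and a change of variables, I would first reduce to the case that $S$ is a small graph over an elliptic paraboloid, so $E_S$ is essentially the standard paraboloid extension operator. A standard $\varepsilon$-removal then reduces the theorem to the local bound
\[ \norm{E_S f}_{L^q(B_R)} \les C_\varepsilon R^\varepsilon \norm{f}_{L^\infty(S)} \]
for every $\varepsilon>0$, every $R\geq 1$, and every ball $B_R\subset \R^3$ of radius $R$. I would set up an induction on $R$ with this local estimate as the inductive hypothesis.

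The analytic input is a wave packet decomposition at scale $R^{1/2}$: write $E_S f = \sum_T E_S f_T$, where each $f_T$ is essentially localized to an $R^{-1/2}$-cap of $S$ and $E_S f_T$ is concentrated on a tube $T$ of length $R$ and radius $R^{1/2}$. Next I would choose a polynomial $P$ of degree $D=D(\varepsilon)$ whose zero set $Z(P)$ partitions $B_R$ into $\sim D^3$ open cells $O_i$ each contributing equally, up to constants, to $\norm{E_S f}_{L^q(B_R)}^q$, and set $W$ to be the $R^{1/2+\delta}$-neighborhood of $Z(P)$. In the \emph{cellular case}, where most of the $L^q$-mass lies in $\bigcup_i O_i \setminus W$, Bezout's theorem shows that each tube enters $O(D)$ cells, so each individual cell sees only an $O(D^{-2})$ fraction of the tubes. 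Applying the inductive hypothesis cell by cell and summing closes the induction with room to spare in this case.

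When the $L^q$-mass concentrates in the wall $W$, I would split the relevant wave packets into those \emph{tangent} to $Z(P)$, i.e., making angle $\les R^{-1/2+\delta}$ with it, and those \emph{transverse}. A transverse tube meets $W$ in $O(D)$ segments of length $\sim R^{1/2+\delta}$, and Wongkew's estimate on the volume of an $r$-neighborhood of an algebraic variety prevents transverse tubes from clustering; this reduces the transverse contribution to an application of the Bennett-Carbery-Tao trilinear restriction theorem, with only a negligible $R^\delta$ loss. Tangent tubes, on the other hand, are confined to a thin slab around the $2$-dimensional variety $Z(P)$, and one exploits this reduced effective dimension together with the strict convexity of $S$ to bound their contribution.

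The hardest step, and the source of the numerical threshold, is balancing these three contributions. The tangential piece gives the binding constraint: its gain from near-$2$-dimensional concentration is weaker than the trilinear gain in the transverse piece, and one must trade the polynomial losses in $D$ coming from Bezout and Wongkew against the gain from induction. Choosing $D$ to be a suitably large function of $\varepsilon$ and tracking the $\varepsilon$-losses carefully, the cellular, transverse, and tangential bounds can all be made strictly better than the inductive hypothesis precisely when $q>3.25$, which closes the induction and yields the theorem.
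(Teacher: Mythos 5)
The paper does not reprove this theorem: it cites it and takes Guth's broad-point estimate (Theorem~\ref{thm:broad}) as the black-box input, with Section~\ref{sec:Pr} showing how to pass from that broad estimate to a linear bound via a Bourgain--Guth style broad--narrow decomposition, parabolic rescaling, and induction on $R$. Your sketch, by contrast, tries to run polynomial partitioning directly on the linear quantity $\norm{E_S f}_{L^q(B_R)}$ with a single induction on $R$, and this architecture has real gaps.

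First, the cellular case as you describe it does not close. The cells $O_i$ of the polynomial partition are not balls of radius significantly smaller than $R$; they are the complementary components of a degree-$D$ variety inside $B_R$. Applying a radius-induction hypothesis ``cell by cell'' therefore gains nothing. What Guth actually does is induct on $\norm{f}_{L^2}$ (equivalently, the number of wave packets): since each tube enters $O(D)$ of the $\sim D^3$ cells, each cell sees roughly a $D^{-2}$-fraction of the wave packets, and the $L^2$ mass per cell drops by $D^{-1}$. This is one reason the right-hand side of Theorem~\ref{thm:broad} involves $\norm{f}_2^{12/13}\norm{f}_\infty^{1/13}$ rather than $\norm{f}_\infty$ alone; an $L^\infty$-only hypothesis has no useful monotonicity under splitting wave packets among cells.

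Second, you apply polynomial partitioning without first restricting to \emph{broad} points, and the tangential contribution cannot be controlled without that restriction. If a single $K^{-1}$-cap dominates, the wave packets tangent to $Z(P)$ can all point in nearly the same direction, and ``reduced effective dimension together with strict convexity'' gives no gain. The actual mechanism is the broad--narrow dichotomy \eqref{eqn:broint}: broad points see contributions from at least two well-separated caps, and it is precisely at broad points that one invokes a \emph{bilinear} estimate (a C\'ordoba-type $L^4$ orthogonality argument / Tao's sharp bilinear bound for paraboloids) for the tangent tubes. Your appeal to the Bennett--Carbery--Tao trilinear theorem misidentifies the multilinear tool: BCT fed into the Bourgain--Guth machine yields $q>10/3$, not $q>3.25$; the improvement to $3.25$ is exactly what the bilinear-at-broad-points analysis inside the polynomial partition buys. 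Relatedly, the transverse contribution is not estimated by a multilinear theorem at all; it is handled by covering the wall $W$ by balls of radius $\sim R^{1-\delta}$ and inducting on the radius, using the fact that a transverse tube meets only $O(\mathrm{poly}(D))$ such balls.

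Finally, the narrow contribution $K^{\epsilon}\max_\tau|E_S f_\tau|$ in \eqref{eqn:broint} must itself be handled by parabolic rescaling (Lemma~\ref{lem:resc}--\ref{lem:para}) and a second, genuine radius induction, exactly as in the proof of Theorem~\ref{thm:improv1}. So the correct architecture is two nested inductions, not one: an outer induction on $R$ that reduces the linear estimate to the broad estimate via broad--narrow plus rescaling, and an inner argument proving the broad estimate by polynomial partitioning with induction on $\norm{f}_2$ (cellular) and on $R$ (transverse) and a bilinear bound (tangent). Your proposal collapses these layers and drops broadness, which is where it breaks.
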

In particular, Theorem \ref{thm:Guth} improves a previous result for $q>56/17$ due to Bourgain-Guth \cite{BG}. When $S$ is the sphere or the truncated paraboloid in $\R^3$, Theorem \ref{thm:Guth} yields $L^q(S) \to L^q(\R^3)$ estimates for the extension operator $E_S$ for all $q>3.25$; see a remark after Theorem 1 in \cite{BG} or Section 19.3 in \cite{Ma}.

We refine Theorem \ref{thm:Guth} by replacing $L^\infty(S)$ with $L^p(S)$ for $p\geq q/(q-2)$, or equivalently $q \geq 2p'$. This range of exponents $p$ is sharp.
\begin{thm}\label{thm:Guth2}
If $S\subset \R^3$ is a compact $C^\infty$ surface with strictly positive second fundamental form, then for $q>3.25$ and $q \geq 2p'$,
\begin{equation}\label{eqn:sharp}
\norm{E_S f}_{L^q(\R^3)} \leq C \norm{f}_{L^p(S)}.
\end{equation}
\end{thm}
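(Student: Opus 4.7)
My strategy is to upgrade Guth's endpoint $L^\infty(S) \to L^q(\R^3)$ estimate (Theorem \ref{thm:Guth}) to the sharp $L^p(S) \to L^q(\R^3)$ statement by combining it with the Stein--Tomas inequality $E_S \colon L^2(S) \to L^4(\R^3)$ and a refined interpolation / $\epsilon$-removal argument that accesses an intermediate estimate from the polynomial-partitioning proof of Theorem \ref{thm:Guth}.

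Several sub-ranges can be handled by classical interpolation. For $p \in [1, 2]$, the scaling endpoint $L^p \to L^{2p'}$ is obtained by Riesz--Thorin interpolation between Stein--Tomas $L^2 \to L^4$ and the trivial $L^1 \to L^\infty$ bound; the larger range $q \geq 2p'$ then follows by further interpolation with the trivial $L^p(S) \to L^\infty(\R^3)$ estimate (valid since $S$ is compact). For $p \geq 2$ and $q \geq 4$, one combines the embedding $L^p(S) \hookrightarrow L^2(S)$ with Stein--Tomas. Using the embedding $L^p(S) \hookrightarrow L^{p_1}(S)$ for $p \geq p_1$, the problem reduces to establishing $L^p \to L^{2p'}$ on the critical range $p \in (2, 13/5)$, where $2p' \in (3.25, 4)$ (the value $p = 13/5$ corresponds to $2p' = 3.25$).

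This critical range is not reached by a direct Riesz--Thorin interpolation of Theorem \ref{thm:Guth} with Stein--Tomas: in the $(1/p, 1/q)$-plane the segment joining $(0, 1/q_0)$ and $(1/2, 1/4)$ meets the scaling line $1/p + 2/q = 1$ only at the Stein--Tomas endpoint whenever $q_0 > 2$, and Guth's theorem only supplies $q_0 > 3.25$. To bridge this gap I would revisit the proof of Theorem \ref{thm:Guth} and extract the localized polynomial-partitioning inequality that underlies it, typically of the form $\|E_S f\|_{L^q(B_R)} \leq C_\epsilon R^\epsilon \|f\|_Y$ with $Y$ a norm intermediate between $L^\infty(S)$ and the cap-localized $L^2$-type norm used in Stein--Tomas (for example, a wave-packet $\ell^2$-sum or a cap-averaged mixed norm). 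Combining this refined local estimate with Stein--Tomas through Tao's $\epsilon$-removal lemma and a Lorentz-space interpolation then produces the sharp $L^p \to L^{2p'}$ estimate on the critical range.

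The main obstacle is identifying the correct intermediate norm $Y$ inside Guth's polynomial-partitioning argument: one must find an interpolation partner for Stein--Tomas that produces an inequality landing on the scaling line without a logarithmic loss. A secondary technical issue is the $\epsilon$-removal step, which must efficiently convert the $R^\epsilon$-scale local estimate into a clean global $L^q(\R^3)$ bound at the sharp endpoint $q = 2p'$.
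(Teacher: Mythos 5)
Your diagnosis of the interpolation obstacle is correct, and so is the guess that one must extract an intermediate estimate from inside the polynomial-partitioning machinery rather than interpolate $L^\infty \to L^q$ with Stein--Tomas; the reduction to the critical range $p\in(2,13/5)$ is also fine. The concrete estimate the paper uses is Guth's bound on the broad part, $\norm{\operatorname{Br}_{K^{-\epsilon}}E_S f}_{L^{3.25}(B_R)}\les_\epsilon R^\epsilon\norm{f}_2^{12/13}\norm{f}_\infty^{1/13}$ (Theorem~\ref{thm:broad}), whose right-hand side already has the $L^2$--$L^\infty$ product form you need. However, this only controls the broad part of $E_S f$, and turning it into a bound on $E_S f$ itself (Theorem~\ref{thm:improv1}: $\norm{E_S f}_{L^{3.25}(B_R)}\les_\epsilon R^\epsilon\norm{f}_2^{10/13}\norm{f}_\infty^{3/13}$) requires an induction on the radius $R$ in which the narrow (per-cap) contribution is handled by parabolic rescaling. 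The key modification, which your sketch does not address, is that the rescaling lemma must be carried out with the mixed $L^2$--$L^\infty$ norm on the right (Lemma~\ref{lem:para}), not the $L^\infty$ norm used in \cite{Guth}; the resulting gain $r^{1+\theta-4/q}$ per scale is what closes the induction for $\theta=3/13$. Your proposed alternatives for the intermediate norm $Y$ (a wave-packet $\ell^2$-sum or a cap-averaged mixed norm) are not what the black-box broad estimate provides and would require reopening Guth's proof; the $L^2$--$L^\infty$ product is both what is available and what yields a restricted-strong-type $(13/5,13/4)$ estimate directly.

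The more serious gap is the endpoint $q=2p'$. The mixed-norm local estimate gives only a restricted strong type (equivalently, a Lorentz $L^{13/5,1}\to L^{13/4}$) bound with an $R^\epsilon$ loss; real or Lorentz-space interpolation with the trivial $L^1\to L^\infty$ bound then yields strong type only for $q>2p'$ after a local-to-global step, and Tao's $\epsilon$-removal (Theorem~\ref{thm:epre}) inherently loses the endpoint as well (it gives $1/s>1/p+O(1/(-\ln\alpha))$, never $s=p$). Thus ``$\epsilon$-removal plus Lorentz interpolation'' cannot reach $q=2p'$. The paper handles this with a genuinely bilinear ingredient: reducing to elliptic pieces, observing that the already-proved linear estimates for $q>2p'$ give bilinear pairs $(1/p,1/q)$ in the open region $Q$, importing a bilinear pair with $q<2p'$ (transverse caps, from \cite[Theorem 2.3]{TVV}), and applying the bilinear-to-linear interpolation theorem \cite[Theorem 2.2]{TVV} to land exactly on the line $q=2p'$. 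Without a step that goes strictly below the scaling line --- which is only accessible through bilinear/transversality input, not through any linear estimate --- your plan cannot close at the endpoint, so the case $q=2p'$ remains unproved in your proposal.
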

\begin{remark}
After submitting the previous version of this article to arXiv, we learned from Marina Iliopoulou that Bassam Shayya already obtained nearly sharp result for $q>3.25$ and $q > 2p'$; see \cite[Corollary 1.1 (iii)]{BS}. That result is a consequence of more general weighted restriction estimates, which are of independent interest. From his nearly sharp result, it is not hard to obtain the sharp line $q=2p'$ by using a bilinear interpolation argument from \cite{TVV}; see Section \ref{sec:bil}. In addition, we find that our proof is similar to that in \cite{BS}, although the proof given here appears to be more straightforward. In this regard, Theorem \ref{thm:Guth2} is essentially due to Bassam Shayya and we do not claim any originality for Theorem \ref{thm:Guth2}.
\end{remark}

Sharp $L^p(S) \to L^q(\R^d)$ estimates were known in the bilinear range $q>2(d+2)/d$ by the work of Tao-Vargas-Vega \cite{TVV} and Tao \cite{TaoB}; see also \cite{Wolff}. 
More recent $L^q(S) \to L^q(\R^d)$ bounds from \cite{BG}, \cite{Guth}, and \cite{Guth2} extend this range of $q$; see \cite[Section 5.2]{LRS}. In particular, when $S\subset \R^3$ is the sphere or the truncated paraboloid, Theorem \ref{thm:Guth} yields, combined with Tao's bilinear estimate \cite{TaoB}, sharp $L^p(S) \to L^q(\R^3)$ estimates for a slightly smaller range of $q$: $q>23/7=3.28\cdots$.

The main ingredient of Theorem \ref{thm:Guth2} is an estimate of Guth \cite[Theorem 2.4]{Guth} for the ``broad" contribution to $E_S f$; see Theorem \ref{thm:broad} below. Here is an overview of the proof. When $q>2p'$, Theorem \ref{thm:Guth2} follows from a variation of the proof from \cite{Guth} that Theorem \ref{thm:broad} implies Theorem \ref{thm:Guth}. Our refinement comes from the use of a parabolic rescaling argument which involves both $L^2(S)$ and $L^\infty(S)$ norms. This modification is natural in view of Theorem \ref{thm:broad}. As a result, we obtain
\[ \norm{E_S f}_{L^{3.25}(B_R)} \leq C_{\epsilon} R^\epsilon \norm{f}_{L^2(S)}^{10/13} \norm{f}_{L^\infty(S)}^{3/13}\]
for any $\epsilon>0$ and any ball $B_R$ of radius $R$, which implies, by real interpolation, $L^p(S) \to L^q(B_R)$ estimates for $q>3.25$ and $q\geq 2p'$ with the epsilon loss $R^\epsilon$. This yields Theorem \ref{thm:Guth2} for $q>2p'$ by an epsilon removal lemma; see Theorem \ref{thm:epre} in Appendix. For the case $q=2p'$, we use a bilinear interpolation argument from \cite{TVV}.

It is worth noting that Cho and Lee \cite{CL} obtained an analogue of Theorem \ref{thm:Guth} for negatively curved quadratic surfaces; see Theorem \ref{thm:hyp}. Using their ``broad" estimate, \cite[Theorem 3.3]{CL}, we obtain
\begin{thm}\label{thm:hyp2} Let $S$ be a compact quadratic surface with strictly negative Gaussian curvature in $\R^3$. Then, for all $q>3.25$ and $q>2p'$,
\begin{equation}\label{eqn:hyp2}
\norm{E_S f}_{L^q(\R^3)} \leq C \norm{f}_{L^p(S)}.
\end{equation}
\end{thm}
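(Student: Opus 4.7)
The plan is to follow the strategy outlined for Theorem~\ref{thm:Guth2} in the introduction, substituting the Cho--Lee broad estimate \cite[Theorem 3.3]{CL} for Guth's broad estimate \cite[Theorem 2.4]{Guth}. The Cho--Lee estimate provides, for the negatively curved quadratic surface $S$, a bound on the ``broad'' contribution to $E_S f$ at the critical exponent $q = 3.25$ of the form $C_\epsilon R^\epsilon \norm{f}_{L^\infty(S)}$ on any ball $B_R \subset \R^3$; this is the only input in the argument that depends on the nature of $S$, so the bulk of Guth's scheme should transport mechanically.

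The first step is a parabolic rescaling argument adapted to $S$. After an affine change of variables we may assume that $S$ is a compact piece of the hyperbolic paraboloid $\xi_3 = \xi_1 \xi_2$, which, like the elliptic paraboloid, admits a two-parameter family of anisotropic scalings preserving the class of extension operators. Partitioning the frequency support into caps of an appropriate size, applying the broad estimate on each rescaled cap, and recombining as in Guth's passage from Theorem~\ref{thm:broad} to Theorem~\ref{thm:Guth}, should produce the hybrid inequality
\[
\norm{E_S f}_{L^{3.25}(B_R)} \leq C_\epsilon R^\epsilon \norm{f}_{L^2(S)}^{10/13} \norm{f}_{L^\infty(S)}^{3/13}
\]
with the same exponents as in the positively curved case. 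Real interpolation of this bound with trivial endpoint estimates then yields $L^p(S) \to L^q(B_R)$ inequalities for all $q > 3.25$ and $q \geq 2p'$ with an $R^\epsilon$ loss, and the epsilon removal lemma (Theorem~\ref{thm:epre} in the appendix) converts them into genuine global $L^p(S) \to L^q(\R^3)$ bounds in the open region $q > 2p'$.

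The main obstacle, and the source of the strictly weaker conclusion compared to Theorem~\ref{thm:Guth2}, is the scaling endpoint $q = 2p'$. The $R^\epsilon$ loss cannot be absorbed at this endpoint by the epsilon removal lemma alone. In the positively curved case one closes the endpoint via the bilinear interpolation scheme of \cite{TVV}, which requires a sharp bilinear extension estimate in the range $q > 10/3$; such an estimate is available for the elliptic paraboloid by \cite{TaoB}, but an analogously sharp bilinear bound for transverse caps on a negatively curved quadratic surface in the needed range is not at hand. Hence the bilinear endpoint step must be omitted, and the conclusion is restricted to the strict inequality $q > 2p'$.
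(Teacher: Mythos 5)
Your overall strategy matches the paper's, but there is a concrete gap in the transfer step. You assert that Guth's passage from the broad estimate to the hybrid inequality transports ``mechanically'' to the Cho--Lee setting, producing the same endpoint bound
\[
\norm{E_S f}_{L^{3.25}(B_R)} \leq C_\epsilon R^\epsilon \norm{f}_{L^2(S)}^{10/13} \norm{f}_{L^\infty(S)}^{3/13}.
\]
This is not quite right, and the failure is precisely why Theorem~\ref{thm:hyp2} has a strict inequality $q>2p'$ built in from the start. The definition of ``broad'' in Cho--Lee is different from Guth's: to make bilinear estimates work for the hyperbolic paraboloid, which contains line segments, they impose a stronger separation condition on the caps. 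As a result the analogue of the pointwise decomposition \eqref{eqn:broint} does not simply pit the broad part against the single-cap terms $E_S f_\tau$; in the induction on $R$ one must also perform a rescaling associated with thin strips of dimensions $1\times K^{-1}$. This anisotropic rescaling does not shrink a ball $B_R$ to a ball of substantially smaller radius, so the gain needed to close the induction at the limiting exponent $\theta=3/13$ is lost. The paper therefore only obtains the hybrid estimate for $3/13<\theta\leq 1$ (Theorem~\ref{thm:imph}), which after restricted-type interpolation gives the local estimate for $q>3.25$ and $q>2p'$ -- strict -- before epsilon removal is even applied.

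Consequently, your narrative that one gets the local estimate at $q\geq 2p'$ and only loses the endpoint through the epsilon removal step misattributes where the strict inequality enters. It appears already in the hybrid inequality, not only at the epsilon removal or bilinear interpolation stage. Your explanation of why the bilinear interpolation route of \cite{TVV} is unavailable (need for a stronger separation condition; failure of the sub-scaling bilinear bound for hyperbolic paraboloids, cf.\ \cite{Lee}, \cite{Vargas}) is correct and consistent with the paper, but in the paper's logic the bilinear route is a second, independent obstruction, not the primary one. To repair your proposal you should replace the claimed $\theta=3/13$ estimate with the family of estimates for $\theta>3/13$, note the extra thin-strip rescaling in the induction, and observe that this already yields only the open range $q>2p'$; the conclusion then follows as you describe.
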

Lee \cite{Lee} and Vargas \cite{Vargas} obtained \eqref{eqn:hyp2} for $q>10/3$ and $q>2p'$ using bilinear estimates; see also \cite{TV}. Unlike in the case of positively curved surfaces, the end point $q=2p'$ remains open in Theorem \ref{thm:hyp2}. This is due to the fact that bilinear estimates for negatively curved surfaces require a stronger separation condition, which results in some loss in deriving linear estimates from bilinear ones; see \cite{Lee} and \cite{Vargas}. Sharp estimates at $q=2p'$ seem to be known only in the Stein-Tomas range for $q\geq 4$; see \cite{Tomas}, \cite{St}, \cite{Greenleaf}, and \cite{Stein}. 

In higher dimensions, Bourgain-Guth \cite{BG} introduced a technique to derive linear restriction estimates from the multilinear restriction estimate of Bennett-Carbery-Tao \cite{BCT}. Assume that $S_1, S_2,\cdots, S_k$ are transverse caps on the truncated paraboloid $S=\{ (\omega,|\omega|^2)\in \R^d: |\omega|\leq 1 \}$ for some $2\leq k \leq d$ and that $f_j$ is supported on $S_j$ for each $1\leq j\leq k$. The $k$-linear restriction estimate takes the following form 
\begin{equation}\label{eqn:kl}
\norm{ \prod_{j=1}^k |E_S f_j|^{1/k} }_{L^p(B_R)} \leq C_\epsilon R^\epsilon \prod_{j=1}^k \norm{f_j}_{L^2(S)}^{1/k}.
\end{equation}
It is conjectured that \eqref{eqn:kl} holds for $p\geq 2\frac{d+k}{d+k-2}$, which is already known when $k=2$ \cite{TaoB} and $k=d$ \cite{BCT}. See also \cite{Be1} and \cite{Be2} for certain sharp estimates for a class of surfaces.

Guth \cite{Guth2} formulated a weaker variant of \eqref{eqn:kl} called $k$-broad inequality and completely settled the question of optimal range of exponents $p$ for all $2\leq k\leq d$; see Theorem \ref{thm:kbroad}. Adapting the Bourgain-Guth induction on scale argument \cite{BG}, he derived new $L^p(S) \to L^p(\R^d)$ estimates for $E_S$ from the $k$-broad inequality. We remark that a part of his proof can be modified so that one obtains $L^p(S)\to L^q(\R^d)$ estimates for some $2\leq p\leq q$ for each $k$-linearity $2 \leq k \leq \frac{d}{2}+1$. 
\begin{thm} \label{thm:high2} Let $d\geq 4$ and $S$ be the truncated paraboloid. For each integer $2\leq k \leq \frac{d}{2}+1$, the operator $E_S$ obeys the estimate 
\begin{equation}\label{eqn:high2}
\norm{E_S f}_{L^{q}(\R^d)} \leq C \norm{f}_{L^{p}(S)}
\end{equation}
for all
\[ q>q(k,d)=\frac{2(d+k)}{d+k-2} \;\; \text{ and } \;\; p \geq p(k,d)=\frac{2(d-k+1)(d+k)}{(d-k+1)(d+k)-2(k-1)}.\]
\end{thm}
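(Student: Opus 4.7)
The plan is to adapt Guth's derivation in \cite{Guth2} of linear $L^p\to L^p$ restriction estimates from his $k$-broad inequality (Theorem~\ref{thm:kbroad}), by carrying out the Bourgain-Guth induction-on-scales with the parabolic rescaling step tracking distinct input and output exponents $p\neq q$. Fix $\epsilon>0$; the target is the local inequality
\[ \|E_S f\|_{L^q(B_R)} \leq \bar C_\epsilon R^\epsilon \|f\|_{L^p(S)}, \]
proved by induction on $R$ at the endpoint $p=p(k,d)$ and $q$ slightly above $q(k,d)$, with the global bound \eqref{eqn:high2} then following by an epsilon-removal lemma (Theorem~\ref{thm:epre}) and interpolation. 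At a scale $K=K(\epsilon)$, cover $S$ by caps $\tau$ of radius $K^{-1}$ and $B_R$ by balls $B_{K^2}$. On each $B_{K^2}$ apply the broad-narrow dichotomy: either at least $k$ pairwise transverse caps contribute comparably to $E_S f$ there (broad), or all significant caps lie in a $K^{-1}$-neighborhood of some $(k-2)$-dimensional affine subspace $V$, of which there are $O(K^{k-2})$ (narrow).

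For the broad part, I would apply the $k$-broad inequality at the endpoint $q(k,d)$ with $L^2(S)$ on the right, interpolate against the trivial $L^1(S)\to L^\infty$ bound to pass from $q(k,d)$ to $q>q(k,d)$, and use the compact inclusion $L^p(S)\hookrightarrow L^2(S)$ (valid since one checks $p(k,d)\geq 2$ for $k\geq 2$) to replace $L^2$ by $L^p$ on the right. For the narrow part, I apply parabolic rescaling to each cap: writing $\tilde f_\tau$ for the rescaled function on the unit paraboloid, one has
\[ \|E_S f_\tau\|_{L^q(B_R)} \leq K^{-(d-1)+(d+1)/q}\|E_S\tilde f_\tau\|_{L^q(B_{CR/K})}, \quad \|\tilde f_\tau\|_{L^p(S)} = K^{(d-1)/p}\|f_\tau\|_{L^p(S)}. \]
Inserting the induction hypothesis at scale $R/K$, assembling the $O(K^{k-2})$ significant cap contributions on each narrow ball, and summing over narrow balls (using that $\sum_\tau \|f_\tau\|_{L^p(S)}^p=\|f\|_{L^p(S)}^p$) gives the narrow estimate.

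The main obstacle is the cap-assembly step in the narrow case. A naive triangle or Hölder inequality over the $O(K^{k-2})$ caps produces a power of $K$ that is too large to be absorbed by the induction at the sharp endpoint, so one must exploit the geometry of wave packets concentrated in the thin tubular neighborhood of $V$—via an orthogonality argument or an $\ell^p$-decoupling inequality adapted to caps lying in a $(k-2)$-plane slab—to recover the correct exponent. Verifying that the total power of $K$ assembled from the rescaling factor $-(d-1)+(d+1)/q+(d-1)/p$, the cap-count $K^{k-2}$, and the assembly step is nonpositive precisely on the line $p=p(k,d)$, $q=q(k,d)$ is the arithmetic heart of the proof, and is what dictates the exact form of $p(k,d)$; the constant then gets absorbed into a $K^{-\epsilon}$ gain once $K$ is chosen large in terms of $\epsilon$. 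The constraint $k\leq d/2+1$ enters exactly to guarantee $p(k,d)\leq q(k,d)$, so that the target $L^p\to L^q$ inequality is meaningful.
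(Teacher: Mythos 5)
Your outline matches the paper's strategy at the structural level: both adapt Guth's derivation (Proposition 9.1 of \cite{Guth2}) of linear estimates from the $k$-broad inequality, running the Bourgain--Guth broad/narrow dichotomy and parabolic-rescaling induction while tracking separate input and output exponents, and both invoke the epsilon-removal lemma at the end. The roles of $k\leq d/2+1$ and of the $k$-broad inequality are correctly identified. But two specifics deserve flagging.

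First, the broad part needs no interpolation: Theorem~\ref{thm:kbroad} already supplies the $k$-broad bound with $L^q(B^{d-1})$ input for \emph{any} $q\geq 2$, so you can feed in $L^{p(k,d)}$ directly (note $p(k,d)\geq 2$). Interpolating the broad norm against $L^1\to L^\infty$ is problematic anyway, since $\operatorname{BL}^p_{k,A}$ is a nonlinear functional of $f$, not an operator bound that obviously interpolates; the paper simply does not take this route. Second, and more importantly, the cap-assembly step in the narrow case—which you correctly call the arithmetic heart—is left as a black box (``orthogonality or $\ell^p$-decoupling''). What the paper actually does is apply Bourgain's $\ell^2$-decoupling to the $O(K^{k-2})$ caps lying near the $(k-1)$-plane $V$, exactly as Guth does, and then in the Hölder step that converts the $\ell^2$-sum to an $\ell^s$-sum it takes $s$ equal to the \emph{input} exponent rather than the output exponent. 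That single substitution (replacing Equation (9.7) of \cite{Guth2}) changes the power of $K$ and is precisely what produces the condition
\[
p \;\geq\; \frac{d+1}{\tfrac{2d-k}{2}-\tfrac{d-k+1}{q}}
\]
on which the induction closes, and hence the value of $p(k,d)$. There is no sharp ``$\ell^p$-decoupling'' for $p>2$ to invoke here; the tool is $\ell^2$-decoupling plus Hölder, and the freedom lies only in the choice of Hölder exponent. Without making this choice explicit, the claimed exponent $p(k,d)$ does not follow—so this is the one genuine gap in your sketch.
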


When $k=2$, Theorem \ref{thm:high2} recovers sharp extension estimates in the bilinear range $q>\frac{2(d+2)}{d}$ from \cite{TVV, TaoB}. When $d$ is even and $k=\frac{d}{2}+1$, then $q(k,d) = p(k,d) = 2\cdot \frac{3d+2}{3d-2}$ and Theorem \ref{thm:high2} recovers the result in \cite{Guth2}. We note that $2\leq p(k,d) < q(k,d)$ when $3\leq k<\frac{d}{2}+1$ and Theorem \ref{thm:high2} seems to be new in this range of $k$. In particular, when $d$ is odd and $k=\frac{d+1}{2}$, then $q(k,d) = 2\cdot \frac{3d+1}{3d-3}$ and $p(k,d)<q(k,d)$, thus slightly extending the range of exponents $p$.

It is expected that a better understanding on the Kakeya conjecture may lead to some further progress on the restriction problem; see, for example, \cite{BG} and \cite{Demeter}.

\subsection*{Acknowledgments} 
I am grateful to Andreas Seeger who brought to my attention the question considered in this paper. I would like to thank Betsy Stovall for pointing out the bilinear interpolation argument used in Section \ref{sec:bil}. 

\section{Preliminaries} \label{sec:Pre}
In this section, we prepare for the proof of Theorem \ref{thm:Guth2}. We recall the estimate on broad points \cite[Theorem 2.4]{Guth} and the parabolic rescaling argument in \cite{Guth}.
\subsection{Estimate on broad points}
Let $\epsilon>0$ and $B_r^2(\omega) = \{ x\in \R^2: |x-\omega|\leq r \}$. Consider a surface $S\subset \R^3$ given as the graph of a function $h:B_1^2(0) \to \R$ satisfying the following conditions for some large $L=L(\epsilon)$, say $10^6 \epsilon^{-2}$.
\begin{con}\label{con}
\begin{enumerate}
\item $0<1/2 \leq \partial^2 h \leq 2$.
\item $0=h(0)=\partial h(0)$.
\item $h$ is $C^L$, and for $3\leq l \leq L$, $\norm{\partial^l h}_{C^0} \leq 10^{-9}$.
\end{enumerate}
\end{con}

Let $K=K(\epsilon)$ be a large number. Partition $S$ into $\sim K^2$ caps $\tau$ of diameter $\sim K^{-1}$. Then we may write $f= \sum_\tau f_\tau$, where $f_\tau=f\chi_{\tau}$. 

We now introduce the concept of broad points. 
For $\alpha\in (0,1)$, $x$ is said to be $\alpha$-broad for $E_Sf$ if 
\[ \max_\tau |E_S f_\tau (x)| \leq \alpha |E_S f(x)|. \]
Define $\operatorname{Br}_\alpha E_S f(x)$ to be $|E_S f(x)|$ if $x$ is $\alpha$-broad, and zero otherwise. With $\alpha=K^{-\epsilon}$, we see that
\begin{equation}\label{eqn:broint}
|E_S f(x) | \leq \max ( \operatorname{Br}_{K^{-\epsilon}} E_S f(x), K^\epsilon \max_\tau |E_S f_\tau(x)| ).
\end{equation}

The term $\max_\tau |E_S f_\tau(x)|$ can be controlled by an induction argument using parabolic rescaling. The main difficulty lies in the estimation of $\operatorname{Br}_{K^{-\epsilon}} E_S f$. 
\begin{thm}[Guth] \label{thm:broad} For any $\epsilon>0$, there exists $K=K(\epsilon)$ and $L=L(\epsilon)$ so that if $S$ obeys Conditions \ref{con} with $L$ derivatives, then for any radius $R$,
\[ \norm{\operatorname{Br}_{K^{-\epsilon} } E_Sf}_{L^{3.25}(B_R)} \leq C_\epsilon R^\epsilon \norm{f}_2^{12/13} \norm{f}_\infty^{1/13}. \]
In fact, we may take $K(\epsilon) = e^{\epsilon^{-10}}$.
\end{thm}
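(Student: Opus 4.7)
The plan is to follow Guth's polynomial partitioning strategy, which builds on the method developed with Katz in \cite{GK}. The argument proceeds by a simultaneous induction on the radius $R$ and on $\norm{f}_2$. The analytic input at the finest scale is the trilinear extension estimate of Bennett--Carbery--Tao \cite{BCT}: if $g_1,g_2,g_3$ are supported on caps with quantitatively transverse normals, then $\prod_j |E_S g_j|^{1/3}$ satisfies a favorable $L^p(B_r)$ estimate with an $r^\epsilon$-loss. The role of the broadness hypothesis with threshold $\alpha = K^{-\epsilon}$ is precisely to reduce to this situation: at a broad point $x$, since no single $K^{-1}$-cap can dominate by more than a factor $K^{-\epsilon}$, one can extract three caps $\tau_1, \tau_2, \tau_3$ whose normals are quantitatively non-coplanar and whose individual contributions are comparable to $|E_S f(x)|$, with the losses absorbed in $K^{O(1)}$.

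To globalize this trilinear input to the desired $L^{3.25}$ bound on a ball of radius $R$, perform a wave packet decomposition $f = \sum_T f_T$ at scale $R^{1/2}$, where each $T$ is a tube of radius $R^{1/2}$ and length $R$ and $E_S f_T$ is essentially supported on $T$. Apply the polynomial ham sandwich theorem to produce a polynomial $P$ of degree $D = D(\epsilon)$ such that $\R^3 \setminus Z(P)$ decomposes into $\sim D^3$ open cells $O_i$ each carrying an equal share of $\int |\operatorname{Br}_{K^{-\epsilon}} E_S f|^{3.25}$. Since the central axis of each wave packet meets $Z(P)$ in at most $D$ points, each tube enters at most $D+1$ cells, and so on average a cell is visited by $\lesssim M D^{-2}$ of the $M$ total tubes; by $L^2$-orthogonality, the effective data on each cell has $L^2$-norm of order $D^{-1} \norm{f}_2$. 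Applying the induction hypothesis cell by cell yields the cellular contribution with a favorable power of $D$.

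The remaining contribution lives in the $R^{1/2}$-neighborhood of $Z(P)$, and is split into \emph{transverse} wave packets (those crossing $Z(P)$) and \emph{tangential} wave packets (those staying $R^{1/2}$-close to $Z(P)$ throughout). A transverse tube meets $Z(P)$ in a bounded number of $R^{1/2}$-disks by B\'ezout, and its contribution is estimated by induction on a smaller ball. Tangential tubes concentrate along the two-dimensional variety $Z(P)$; one covers its $R^{1/2}$-neighborhood by $R^{1/2}$-balls, exploits the low-dimensional algebraic structure and the trilinear input inside each ball, and invokes induction on scale.

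The principal obstacle is the tangential term: wave packets clustering on a two-dimensional algebraic surface sit exactly where transversality degenerates, and one must use the algebraic geometry of $Z(P)$ carefully (bounded degree, dimension two) to avoid catastrophic losses in $D$. The delicate balancing of the cellular, transverse, and tangential contributions forces the exponent pair $(12/13, 1/13)$ and the endpoint $L^{3.25} = L^{13/4}$, while the parameter choice $K(\epsilon) = e^{\epsilon^{-10}}$ is calibrated so that the $K^{O(1)}$ losses accumulating over $O(\log R)$ inductive scales sum to at most $R^\epsilon$.
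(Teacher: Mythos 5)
The paper does not prove this theorem at all: it is Guth's Theorem~2.4 in \cite{Guth}, quoted as a black box, with only a one-sentence summary stating that the proof ``involves polynomial partitioning, inductions on $R$ and $\norm{f}_{L^2}$, bilinear estimates, and geometry of tubes and algebraic surfaces.'' Your sketch reproduces the overall Guth--Katz polynomial partitioning scaffolding faithfully (double induction, wave packets at scale $R^{1/2}$, ham sandwich at degree $D(\epsilon)$ producing $\sim D^3$ cells, the cellular/transverse/tangential trichotomy, the $D^{-1}\norm{f}_2$ gain per cell from $L^2$-orthogonality and the B\'ezout tube-crossing bound), and the calibration remark about $K(\epsilon)=e^{\epsilon^{-10}}$ is on target.

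However, there is a genuine error in the analytic input. You claim that at a broad point one can extract \emph{three} caps $\tau_1,\tau_2,\tau_3$ with quantitatively non-coplanar normals and that the base-case estimate is the trilinear Bennett--Carbery--Tao extension inequality. The broadness condition here is only $\max_\tau |E_Sf_\tau(x)|\le K^{-\epsilon}|E_Sf(x)|$: this prevents any single $K^{-1}$-cap from dominating, but it is perfectly compatible with all the contributing caps lying along a single great circle of directions, i.e.\ with the normals spanning only a $2$-plane. In that configuration no trilinearly transverse triple of caps exists, so the BCT trilinear estimate is not available. Guth's argument in \cite{Guth} accordingly uses only \emph{bilinear} input (Tao's sharp bilinear paraboloid estimate, together with a C\'ordoba-type $L^4$ argument on small balls) in the tangential/near-variety case; multilinear Kakeya-type geometry enters only to control tube incidences near the algebraic surface, not as a trilinear restriction estimate. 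The trilinear and $k$-linear reductions you describe are the mechanism of the Bourgain--Guth paper and of Guth's second paper \cite{Guth2}, where the stronger $k$-broad norms are introduced precisely to legislate the higher-order transversality that plain $\alpha$-broadness does not supply. Replacing ``trilinear BCT'' by ``bilinear Tao'' (and the phrase ``three non-coplanar caps'' by ``two transverse caps, exploited near the low-degree variety'') would bring your outline in line with what \cite{Guth} actually does.
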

This is \cite[Theorem 2.4]{Guth}. It is sharp in the sense that given the right-hand side, the exponent $3.25$ in the inequality may not be decreased. The proof of Theorem \ref{thm:broad} involves polynomial partitioning, inductions on $R$ and $\norm{f}_{L^2}$, bilinear estimates, and geometry of tubes and algebraic surfaces.

\subsection{Parabolic rescaling}
We summarize a scaling argument from \cite[Section 2.3]{Guth} as a lemma. In what follows, we identify a function on the graph of $h:U \to \R$ with a function on $U\subset \R^2$.
\begin{lem}\label{lem:resc}
Assume that $h$ satisfies Conditions \ref{con} with $L$ derivatives. Let $0<r \ll 1$ and $S_0\subset S$ be the graph of $h$ over $B^2_r(\omega_0)$ for some $\omega_0\in B^2_1(0)$. Then there exists $h_1:B^2_1(0)\to \R$ satisfying Conditions \ref{con} with $L$ derivatives such that if $S_1$ is the graph of $h_1$, then 
\[|E_{S_0} f (x)|= |E_{S_1} g (\Phi(x))|, \] 
where
\begin{align*}
g(\eta) &= f(\omega_0 + r\eta) r^2|Jh||Jh_1|^{-1},\\
\Phi({x}) &= (rx_1+r\partial_1h(\omega_0)x_3, rx_2+r\partial_2h(\omega_0)x_3, r^2 x_3).
\end{align*}
Here, $|Jh|$ and $|Jh_1|$ are Jacobian factors bounded by $\sqrt{5}$. Moreover, $h_1$ satisfies $\partial^2_{ij} h_1(\eta) = \partial^2_{ij} h(\omega_0+r\eta)$ and $\norm{\partial ^l h_1}_{C^0} \leq r^{l-2} \norm{\partial^l h}_{C^0}$ for $3\leq l\leq L$.
\end{lem}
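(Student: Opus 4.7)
The plan is to write out $E_{S_0} f$ explicitly via the graph parameterization and execute the change of variable $\omega = \omega_0 + r\eta$, with $h_1$ defined so as to absorb the non-affine part of $h$ after the affine part of the phase is split off. Concretely, I would begin from
\[ E_{S_0} f(x) = \int_{B^2_r(\omega_0)} e^{2\pi i ( x_1\omega_1 + x_2\omega_2 + x_3 h(\omega))} f(\omega) |Jh|(\omega)\, d\omega, \]
substitute $\omega = \omega_0 + r\eta$, and use the Taylor-type identity
\[ h(\omega_0 + r\eta) = h(\omega_0) + r\nabla h(\omega_0)\cdot \eta + r^2 h_1(\eta), \]
which we take as the \emph{definition} of $h_1(\eta)$. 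A direct computation then gives $\partial^2_{ij} h_1(\eta) = \partial^2_{ij} h(\omega_0+ r\eta)$ and $\partial^l h_1(\eta) = r^{l-2}\,\partial^l h(\omega_0+r\eta)$ for $l\geq 2$, while $h_1(0)=0$ and $\nabla h_1(0)=0$ by construction.

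The key step is phase reorganization. After substitution the phase becomes
\[ \bigl[\omega_0\cdot (x_1,x_2) + h(\omega_0)x_3\bigr] + \eta\cdot\bigl( rx_1 + r\partial_1 h(\omega_0) x_3,\; rx_2 + r\partial_2 h(\omega_0) x_3\bigr) + (r^2 x_3)\, h_1(\eta). \]
The bracketed constant is independent of $\eta$ and contributes a unimodular factor that disappears when we take absolute values. The remaining terms are exactly $\Phi(x)\cdot(\eta, h_1(\eta))$, which is the phase of $E_{S_1}$ evaluated at $\Phi(x)$. Absorbing the Jacobian factor $r^2 |Jh|(\omega_0 + r\eta)$ coming from $d\omega = r^2\, d\eta$ and the area element of $S_0$ into the density, and dividing by the area element $|Jh_1|(\eta)$ of $S_1$, produces the stated formula for $g$. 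Taking absolute values yields $|E_{S_0} f(x)| = |E_{S_1} g(\Phi(x))|$.

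It remains to verify that $h_1$ satisfies Conditions 2.1. The identities $h_1(0)=0$ and $\partial h_1(0)=0$ hold by construction; the bound $1/2 \leq \partial^2 h_1 \leq 2$ transfers verbatim from $h$ via $\partial^2 h_1(\eta) = \partial^2 h(\omega_0 + r\eta)$; and for $3\leq l\leq L$ the rescaling identity gives $\|\partial^l h_1\|_{C^0} \leq r^{l-2} \|\partial^l h\|_{C^0} \leq 10^{-9}$ since $r\leq 1$. Since $1/2\leq \partial^2 h\leq 2$ and $\nabla h(0)=0$ force $|\nabla h|\leq 2$ on $B_1^2(0)$, both Jacobian factors $|Jh|=\sqrt{1+|\nabla h|^2}$ and $|Jh_1|$ are bounded by $\sqrt{5}$.

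There is no real obstacle in this argument; the entire content is the bookkeeping of the phase after the dilation, with the particular affine map $\Phi$ chosen precisely so that the gradient term $r\nabla h(\omega_0)\cdot\eta\, x_3$ is merged into the first two components of $\Phi(x)$. The only mild care is in tracking the Jacobians so that the extension operator for $S_1$ is reproduced with the correct measure $|Jh_1|\, d\eta$.
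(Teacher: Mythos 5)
Your proof is correct and is precisely the standard parabolic rescaling computation. The paper itself does not give a proof of this lemma but cites \cite[Section 2.3]{Guth}; your argument --- define $h_1$ by the Taylor identity $h(\omega_0+r\eta)=h(\omega_0)+r\nabla h(\omega_0)\cdot\eta+r^2 h_1(\eta)$, split the phase into a constant part (which becomes a unimodular factor), a part linear in $\eta$ that assembles into $\Phi_1(x)\eta_1+\Phi_2(x)\eta_2$, and the nonlinear part $\Phi_3(x)h_1(\eta)$, then track the Jacobians $r^2\,|Jh|\,|Jh_1|^{-1}$ --- is exactly what the cited source does, and your verification of Conditions 2.1 for $h_1$ and of the bound $|Jh|,|Jh_1|\leq\sqrt{5}$ is complete.
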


We shall use parabolic rescaling which involves both $L^2$ and $L^\infty$ norms. The following is a version of \cite[Lemma 2.5]{Guth}.
\begin{lem}\label{lem:para} Let $S_0$ and $S_1$ as in Lemma \ref{lem:resc}. Assume that
\begin{equation}\label{eqn:s1}
\norm{E_{S_1} g}_{L^q(B_{10rR})} \leq M \norm{g}_{L^{2}(S_1)}^{1-\theta} \norm{g}_{L^{\infty}(S_1)}^{\theta}
\end{equation} 
for some $0\leq \theta \leq 1$. Then 
\[ \norm{E_{S_0} f}_{L^q(B_R)} \leq 10 r^{1+\theta - \frac{4}{q}} M\norm{f}_{L^{2}(S_0)}^{1-\theta} \norm{f}_{L^{\infty}(S_0)}^{\theta}. \]
\end{lem}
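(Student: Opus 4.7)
The plan is to derive this directly from Lemma \ref{lem:resc} by a change of variables and rescaling of the norms. The proof is essentially bookkeeping of powers of $r$, with no substantive obstacle beyond the combinatorics of the exponents.

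First, I would invoke Lemma \ref{lem:resc} to obtain the pointwise identity $|E_{S_0} f(x)| = |E_{S_1} g(\Phi(x))|$ with the stated $g$ and the linear map $\Phi$. Since $\Phi$ is a linear map of $\R^3$ with determinant $r \cdot r \cdot r^2 = r^4$, the change of variables $y = \Phi(x)$ gives
\[
\norm{E_{S_0} f}_{L^q(B_R)}^q = r^{-4} \int_{\Phi(B_R)} |E_{S_1} g(y)|^q \, dy.
\]
Next I would verify $\Phi(B_R) \subset B_{10rR}$. Since $h$ satisfies Conditions \ref{con}, we have $\partial h(0)=0$ and $|\partial^2 h|\leq 2$, so $|\partial h(\omega_0)|\leq 2|\omega_0|\leq 2$. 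This gives $|\Phi(x)_j|\leq 3rR$ for $j=1,2$ and $|\Phi(x)_3|\leq r^2 R \leq rR$, so the inclusion holds with room to spare. Consequently
\[
\norm{E_{S_0} f}_{L^q(B_R)} \leq r^{-4/q} \norm{E_{S_1} g}_{L^q(B_{10rR})}.
\]

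Now I would apply hypothesis \eqref{eqn:s1} and convert the norms of $g$ back to norms of $f$. From the formula $g(\eta) = f(\omega_0 + r\eta)\, r^2 |Jh||Jh_1|^{-1}$ and the Jacobian bounds $|Jh|,|Jh_1|\in[1,\sqrt{5}]$ (so this factor is at most $\sqrt{5}$), one reads off
\[
\norm{g}_{L^\infty(S_1)} \leq \sqrt{5}\, r^2 \norm{f}_{L^\infty(S_0)},
\]
and changing variables $\xi=\omega_0+r\eta$ (with Jacobian $r^2$) yields
\[
\norm{g}_{L^2(S_1)}^2 \leq 5\, r^4 \int_{B_r^2(\omega_0)} |f(\xi)|^2 r^{-2} d\xi \leq 5\, r^2 \norm{f}_{L^2(S_0)}^2.
\]

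Substituting both bounds into the hypothesis gives, with an absolute constant $C$ (which one easily checks is $\leq 10$ for either choice of $\theta\in[0,1]$),
\[
\norm{E_{S_0} f}_{L^q(B_R)} \leq C\, r^{-4/q}\, M\, \bigl(r\norm{f}_{L^2(S_0)}\bigr)^{1-\theta} \bigl(r^2 \norm{f}_{L^\infty(S_0)}\bigr)^{\theta}.
\]
Collecting the powers of $r$ produces the exponent $(1-\theta) + 2\theta - 4/q = 1 + \theta - 4/q$, which is precisely the conclusion of the lemma. The only place where any care is needed is the verification $\Phi(B_R)\subset B_{10rR}$, which is why the constant $10$ (rather than $1$) appears in the statement.
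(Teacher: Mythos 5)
Your proof is correct and follows the same route as the paper: invoke Lemma~\ref{lem:resc} for the pointwise identity, change variables via $\Phi$ with $\det\Phi = r^4$ and $\Phi(B_R)\subset B_{10rR}$, apply the hypothesis~\eqref{eqn:s1}, and convert the norms of $g$ to norms of $f$ using $\norm{g}_{L^s(S_1)}\lesssim r^{2/s'}\norm{f}_{L^s(S_0)}$ for $s=2$ and $s=\infty$. Your bookkeeping of the powers of $r$ and the verification of the inclusion $\Phi(B_R)\subset B_{10rR}$ match the paper's argument.
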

\begin{proof}
Let $g$ be as in Lemma \ref{lem:resc}. Since $\det(\Phi) = r^4$ and $\Phi(B_R) \subset B_{10rR}$, we have
\begin{align*}
\norm{E_{S_0} f}_{L^q(B_R)} &\leq r^{-4/q} \norm{E_{S_1} g}_{L^q(B_{10rR})} 
\leq r^{-4/q}M \norm{g}_{L^{2}(S_1)}^{1-\theta} \norm{g}_{L^{\infty}(S_1)}^{\theta} \\
&\leq 10 r^{1+\theta - \frac{4}{q}}M \norm{f}_{L^{2}(S_0)}^{1-\theta} \norm{f}_{L^{\infty}(S_0)}^{\theta},
\end{align*}
where we used $\norm{g}_{L^{s}(S_1)} \leq 10 r^{2/{s'}} \norm{f}_{L^{s}(S_0)}.$
\end{proof}

\section{Proof of Theorem \ref{thm:Guth2}}\label{sec:Pr}
We shall first prove Theorem \ref{thm:Guth2} when $q>3.25$ and $q>2p'$. In Section \ref{sec:bil}, we extend the result to the scaling line $q = 2p'$ by an interpolation argument.
\subsection{An extension estimate implied by Theorem \ref{thm:broad}}
The main ingredient of the proof is the following extension estimate (cf. \cite[Theorem 2.2]{Guth}).
\begin{thm}\label{thm:improv1} For any $\epsilon>0$, there exists $L=L(\epsilon)$ so that if $S$ obeys Conditions \ref{con} with $L$ derivatives, then for any radius $R$, the extension operator $E_S$ obeys the inequality 
\begin{equation}\label{eqn:improv1}
\norm{E_S f}_{L^{3.25}(B_R)} \leq C_{S,\epsilon} R^\epsilon \norm{f}_{L^2(S)}^{10/13} \norm{f}_{L^\infty(S)}^{3/13}.
\end{equation}
\end{thm}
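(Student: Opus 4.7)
The plan is to adapt Guth's induction-on-scales argument for \cite[Theorem 2.2]{Guth}, the key new ingredient being the balanced parabolic rescaling from Lemma \ref{lem:para} applied at $\theta = 3/13$, $q = 13/4$; this choice is critical because $1+\theta-4/q = 0$, so the scaling prefactor in Lemma \ref{lem:para} collapses to the absolute constant $10$. Given a target $\epsilon > 0$, I would fix $\epsilon_0 \in (0,\epsilon)$, apply Theorem \ref{thm:broad} with parameter $\epsilon_0$ to obtain $K = K(\epsilon_0)$ and $L = L(\epsilon_0)$ (possibly enlarging $K$ at the end), and induct on $R$. The base case $R\lesssim 1$ follows from $\norm{E_S f}_\infty \leq \norm{f}_{L^1(S)}$ and H\"older, which give $\norm{E_S f}_\infty \leq C_S \norm{f}_{L^2(S)}^{10/13}\norm{f}_{L^\infty(S)}^{3/13}$.

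For the inductive step, partition $S$ into $\sim K^2$ caps $\tau$ of diameter $\sim K^{-1}$, write $f = \sum_\tau f_\tau$, and apply \eqref{eqn:broint} together with $\max(A,B)^{3.25}\leq A^{3.25}+B^{3.25}$:
\[
\norm{E_S f}_{L^{3.25}(B_R)}^{3.25}
\leq \norm{\operatorname{Br}_{K^{-\epsilon_0}} E_S f}_{L^{3.25}(B_R)}^{3.25}
+ K^{3.25\epsilon_0}\sum_\tau \norm{E_S f_\tau}_{L^{3.25}(B_R)}^{3.25}.
\]
For the broad piece, Theorem \ref{thm:broad} yields the bound $(C_{\epsilon_0}R^{\epsilon_0})^{3.25}\norm{f}_{L^2(S)}^{3}\norm{f}_{L^\infty(S)}^{1/4}$; absorbing the excess factor via $\norm{f}_{L^2(S)}^{1/2}\leq|S|^{1/4}\norm{f}_{L^\infty(S)}^{1/2}$ and $R^{3.25\epsilon_0}\leq R^{3.25\epsilon}$ produces a bound of $C_{\epsilon_0}^{3.25}|S|^{1/4} R^{3.25\epsilon}\norm{f}_{L^2(S)}^{5/2}\norm{f}_{L^\infty(S)}^{3/4}$.

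For the narrow piece, Lemma \ref{lem:resc} gives a rescaled surface that still satisfies Conditions \ref{con}, so Lemma \ref{lem:para} with $r = K^{-1}$, $\theta = 3/13$, $q = 13/4$ together with the inductive hypothesis at the smaller radius $10R/K$ yields
\[
\norm{E_S f_\tau}_{L^{3.25}(B_R)}\leq 10\,C_{S,\epsilon}(10R/K)^\epsilon\norm{f_\tau}_{L^2(S)}^{10/13}\norm{f_\tau}_{L^\infty(S)}^{3/13}.
\]
Raising to the $3.25$-th power makes the exponents $5/2$ and $3/4$. Using $\max_\tau\norm{f_\tau}_{L^\infty(S)}\leq\norm{f}_{L^\infty(S)}$ and the elementary inequality $\sum_\tau a_\tau^{5/2}\leq(\sum_\tau a_\tau^{2})^{5/4}$ (equivalent to $\norm{a}_{\ell^{5/2}}\leq\norm{a}_{\ell^2}$) applied to $a_\tau = \norm{f_\tau}_{L^2(S)}$, one obtains
\[
\sum_\tau \norm{f_\tau}_{L^2(S)}^{5/2}\norm{f_\tau}_{L^\infty(S)}^{3/4}
\leq \norm{f}_{L^2(S)}^{5/2}\norm{f}_{L^\infty(S)}^{3/4},
\]
where disjointness of the caps is used in $\sum_\tau\norm{f_\tau}_{L^2(S)}^2 = \norm{f}_{L^2(S)}^2$. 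The narrow contribution is thus at most $10^{3.25(1+\epsilon)} K^{-3.25(\epsilon-\epsilon_0)}\, C_{S,\epsilon}^{3.25} R^{3.25\epsilon}\norm{f}_{L^2(S)}^{5/2}\norm{f}_{L^\infty(S)}^{3/4}$.

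The main obstacle is closing the induction. Unlike Guth's case $\theta = 1$, which produces a gain $K^{-1/2}$ from the scaling factor, the choice $\theta = 3/13$ gives \emph{no} such gain, and the constant $10$ from Lemma \ref{lem:para} cannot be absorbed directly. The fix is to exploit the gap $\epsilon - \epsilon_0 > 0$: the narrow prefactor $10^{3.25(1+\epsilon)} K^{-3.25(\epsilon-\epsilon_0)}$ can be made $\leq 1/2$ by taking $K$ larger than a constant depending on $\epsilon-\epsilon_0$, which is compatible with the conclusion of Theorem \ref{thm:broad}. The narrow term then absorbs half of the desired bound $C_{S,\epsilon}^{3.25}R^{3.25\epsilon}\norm{f}_{L^2(S)}^{5/2}\norm{f}_{L^\infty(S)}^{3/4}$, while the broad term fills the other half once $C_{S,\epsilon}$ is chosen sufficiently large in terms of $C_{\epsilon_0}$ and $|S|$, completing the induction and establishing \eqref{eqn:improv1}.
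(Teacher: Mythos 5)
Your proposal is essentially the paper's proof: partition via \eqref{eqn:broint} at a reduced exponent, bound the broad piece by Theorem \ref{thm:broad} (trading $\norm{f}_2^{2/13}$ for $C_S^{2/13}\norm{f}_\infty^{2/13}$), handle the narrow piece by Lemma \ref{lem:para} with $\theta = 3/13$, $q = 13/4$ so that $1 + \theta - 4/q = 0$ kills the scaling prefactor, and sum over caps using $\ell^2 \hookrightarrow \ell^{5/2}$. You have also correctly isolated the delicate point: at $\theta = 3/13$ the scaling gives no $K$-gain, and the gap $\epsilon - \epsilon_0 > 0$ must supply it. The paper does this by taking $\epsilon_0 = \epsilon/2$ together with the \emph{explicit} value $K(\epsilon/2) = e^{(\epsilon/2)^{-10}}$, which makes $10^{1+\epsilon} K^{-\epsilon/2} = 10^{1+\epsilon} e^{-(\epsilon/2)^{-9}} \leq 1/2$ automatic; no enlargement of $K$ is needed.

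The only place where I would push back is the phrase ``possibly enlarging $K$ \ldots which is compatible with the conclusion of Theorem \ref{thm:broad}.'' That compatibility is not part of the stated theorem: $K$ enters both the cap size and the broadness parameter $\alpha = K^{-\epsilon_0}$, so it is not a priori clear that the broad estimate is monotone in $K$, and you should not rely on it without an argument. The cleaner route -- and the one the paper takes -- is to choose $\epsilon_0 = \epsilon/2$ and then observe that the specific $K = e^{(\epsilon/2)^{-10}}$ guaranteed by Theorem \ref{thm:broad} already satisfies $K^{\epsilon - \epsilon_0} \gg 10^{1+\epsilon}$. With that substitution your argument is complete and matches the paper's.
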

\begin{proof}
The proof is similar to the proof of \cite[Theorem 2.2]{Guth} for the local $L^\infty(S) \to L^{3.25}(B_R)$ estimate. We use not only $L^\infty(S)$ but also some $L^2(S)$ norm, which is suggested by Theorem \ref{thm:broad}.

We may assume that $0<\epsilon<1$ and $R\geq 1$. It will be useful to use the scale $\epsilon/2$ as well\footnote{The use of $\epsilon/2$ is not necessary for estimates weaker than \eqref{eqn:improv1} where $\norm{f}_{L^2(S)}^{10/13} \norm{f}_{L^\infty(S)}^{3/13}$ is replaced by $\norm{f}_{L^2(S)}^{1-\theta} \norm{f}_{L^\infty(S)}^{\theta}$ for any $3/13<\theta\leq 1$}. Let $K=K(\epsilon/2)=e^{(\epsilon/2)^{-10}}$ and assume that $S$ obeys Conditions \ref{con} with $L(\epsilon/2)$ derivatives, where $K(\epsilon)$ and $L(\epsilon)$ are the parameters in Theorem \ref{thm:broad}. Using \eqref{eqn:broint} with $\epsilon/2$ instead of $\epsilon$, we bound $\int_{B_R} |E_S f|^{3.25}$ by
\begin{equation}\label{eqn:brop}
\int_{B_R} \operatorname{Br}_{K^{-\epsilon/2}} E_S f^{3.25} +  \sum_{\tau} \int_{B_R} |K^{\epsilon/2} E_Sf_\tau|^{3.25}.
\end{equation}
By Theorem \ref{thm:broad}, the first term in \eqref{eqn:brop} is bounded by 
\[ \big(C_{\epsilon/2} R^{\epsilon/2} \norm{f}_2^{12/13} \norm{f}_\infty^{1/13}\big)^{3.25} \leq \big({C}_{S}^{2/13} C_{\epsilon/2} R^{\epsilon/2} \norm{f}_2^{10/13} \norm{f}_\infty^{3/13} \big)^{3.25} \]
since $\norm{f}_{L^2(S)} \leq C_S \norm{f}_{L^\infty(S)}$ for some $C_S>0$. 

To handle the second term in \eqref{eqn:brop}, we use an induction on $R$. Since Theorem \ref{thm:improv1} is trivial for $R=O(1)$, we shall assume that it holds for all radii less than $R/2$ with some constant $C_{S,\epsilon} \geq 2C_S^{2/13} C_{\epsilon/2}$, and then deduce that \eqref{eqn:improv1} holds for the radius $R$ with the same constant. Since the first term in \eqref{eqn:brop} is bounded by $[\frac{1}{2} C_{S,\epsilon} R^\epsilon \norm{f}_{L^2(S)}^{10/13} \norm{f}_{L^\infty(S)}^{3/13}]^{3.25}$, the induction closes if the second term in \eqref{eqn:brop} is bounded by the same expression.

Recall that $\tau$ is a cap of diameter $\sim K$. Therefore, we may assume that $\tau$ is contained in the graph $S_0$ of $h$ over some ball $B^2_{r}(\omega_0)$ of radius $r=K^{-1}$. Let $S_1$ be the surface as in Lemma \ref{lem:resc}. As $10rR<R/2$, the induction hypothesis implies
\begin{equation*}
\norm{E_{S_1} g}_{L^{3.25}(B_{10rR})} \leq C_{S,\epsilon} (10rR)^\epsilon \norm{g}_{L^{2}(S_1)}^{10/13} \norm{g}_{L^{\infty}(S_1)}^{3/13},
\end{equation*} 
which yields
\begin{equation}\label{eqn:single}
\int_{B_R} |E_S f_\tau|^{3.25} \leq  \big( 10^{1+\epsilon} K^{-\epsilon} C_{S,\epsilon} R^\epsilon \norm{f_\tau}_{L^{2}(S)}^{10/13} \norm{f_\tau}_{L^{\infty}(S)}^{3/13} \big)^{3.25}
\end{equation}
by Lemma \ref{lem:para} and the fact that $E_S f_\tau = E_{S_0} f_\tau$.

We bound $\norm{f_\tau}_{L^{\infty}(S)}$ by $\norm{f}_{L^{\infty}(S)}$ and then sum \eqref{eqn:single} over $\tau$ using the embedding $l^2 \hookrightarrow l^{2.5}$. Then we get 
\[ \sum_{\tau} \int_{B_R} |K^{\epsilon/2} E_S f_\tau|^{3.25} \leq 
\big(  10^{1+\epsilon} K^{-\epsilon/2} C_{S,\epsilon} R^\epsilon \norm{f}_{L^{2}(S)}^{10/13} \norm{f}_{L^{\infty}(S)}^{3/13} \big)^{3.25}. \]
Therefore, the induction closes since $10^{1+\epsilon} K^{-\epsilon/2} = 10^{1+\epsilon}  e^{-(\epsilon/2)^{-9}} \leq 1/2$. 
\end{proof}

\subsection{Theorem \ref{thm:Guth2} when $q>2p'$} \label{sec:para}
From Theorem \ref{thm:improv1}, we deduce the following result by a standard argument.
\begin{thm}\label{thm:imp} If $S\subset \R^3$ is a compact $C^\infty$ surface with strictly positive second fundamental form, then for all $\epsilon>0$ and any radius $R$, the extension operator $E_S$ obeys the inequality 
\[
\norm{E_S f}_{L^{3.25}(B_R)} \leq C_{S,\epsilon} R^\epsilon \norm{f}_{L^2(S)}^{10/13} \norm{f}_{L^\infty(S)}^{3/13}.
\]
\end{thm}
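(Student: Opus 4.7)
The plan is to reduce Theorem \ref{thm:imp} to Theorem \ref{thm:improv1} by decomposing $S$ into finitely many caps, each of which, after an affine change of variables in $\R^3$, becomes the graph of a function satisfying Conditions \ref{con}. This is the standard reduction exploiting compactness of $S$ and strict positivity of the second fundamental form.

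First, I cover $S$ by finitely many caps $\tau_1,\dots,\tau_N$, each a graph over its tangent plane at some basepoint $p_j\in S$. For each cap, I construct an invertible affine transformation $A_j$ of $\R^3$ as follows: a rotation makes the tangent plane at $p_j$ horizontal; a linear change of variables in the parameter plane diagonalizes $\partial^2 h_j(p_j)$ and rescales it to the identity. By uniform continuity and compactness of $S$, one can choose a radius for the cap, uniform in $j$, so that $1/2\leq \partial^2 h_j\leq 2$ throughout. A subsequent parabolic rescaling of the type in Lemma \ref{lem:resc}, with a small scale $r=r(\epsilon,S)$, ensures in addition that $\norm{\partial^l h_j}_{C^0}\leq 10^{-9}$ for $3\leq l\leq L(\epsilon)$, since higher derivatives acquire a factor $r^{l-2}$. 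The resulting surface $\widetilde S_j$ then satisfies Conditions \ref{con} with $L(\epsilon)$ derivatives.

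The affine map $A_j$ pulls back the extension estimate covariantly. The change of variables on the frequency side sends $f\chi_{\tau_j}$ to a function $\widetilde f_j$ on $\widetilde S_j$, and the dual map on the physical side sends $B_R$ to an ellipsoid contained in $B_{C_S R}$. Tracking the Jacobian factors, the quantities $\norm{E_S(f\chi_{\tau_j})}_{L^{3.25}(B_R)}$, $\norm{f\chi_{\tau_j}}_{L^2(S)}$, and $\norm{f\chi_{\tau_j}}_{L^\infty(S)}$ are all equivalent, up to multiplicative constants depending only on $S$ and $\epsilon$, to the corresponding quantities for $\widetilde f_j$ on $\widetilde S_j$. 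Applying Theorem \ref{thm:improv1} to each $\widetilde S_j$ yields
\[ \norm{E_S(f\chi_{\tau_j})}_{L^{3.25}(B_R)} \leq C_{S,\epsilon}\, R^\epsilon\, \norm{f\chi_{\tau_j}}_{L^2(S)}^{10/13} \norm{f\chi_{\tau_j}}_{L^\infty(S)}^{3/13}, \]
after which bounding the right-hand side by the full norms of $f$ and summing over the finite collection of $N=N(S,\epsilon)$ caps via the triangle inequality completes the proof.

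The main obstacle is purely technical bookkeeping: one has to combine the rotation, the linear normalization of $\partial^2 h_j$, and the parabolic rescaling into a single affine transformation, and then verify that the mixed $L^2$-$L^\infty$ form of the right-hand side is preserved under the pullback. No new analytic ideas are required beyond those contained in Theorem \ref{thm:improv1} and Lemma \ref{lem:resc}, and the finite summation costs only a constant factor absorbed into $C_{S,\epsilon}$.
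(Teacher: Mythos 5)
Your proposal is correct and follows essentially the same route as the paper: a finite decomposition into caps, affine changes of variables to bring each cap into a graph satisfying Conditions~2.1, parabolic rescaling to control higher derivatives, and then an application of Theorem~\ref{thm:improv1} followed by the triangle inequality over the finitely many caps. The only cosmetic difference is the order of the normalization steps (the paper shrinks higher derivatives first, then normalizes the Hessian and rescales again to fix the support, while you normalize the Hessian first), but both arrangements yield the same reduction.
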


For the convenience of the reader, we sketch the standard argument here following \cite[Section2.3]{Guth}. In the paper \cite{Guth}, that argument was used for the global $L^\infty(S) \to L^{q}(\R^d)$ estimates, but it would also work for our situation. By a finite decomposition of $S$ and choosing an appropriate coordinate, we may assume that $S$ is contained in the graph of a smooth function $h:B^2_1(0) \to \R$ satisfying $h(0)=\partial h(0) = 0$. By the assumption, $\partial^2 h$ is positive definite and satisfies $\Lambda^{-1} \leq \partial^2 h \leq \Lambda$ for some $\Lambda=\Lambda_S>1$. Given $L=L(\epsilon)$, we decompose $S$ into caps of diameter $r=r(\Lambda, \norm{h}_{C^L})$. Since the number of the caps depends only on $S$ and $\epsilon$, it suffices to prove the extension estimate associated with a fixed cap. We may choose $r$ sufficiently small, so that, after parabolic rescaling, $\norm{\partial^l h_1}_{C^0} \leq 10^{-10} \Lambda^{-l}$ for all $3\leq l\leq L$. Then we do a change of variable so that $\partial^2 h_1(0)$ is the identity matrix. This may increase the size of the support of $h_1$, but by a further parabolic rescaling, $h_1$ can be made to satisfy Conditions \ref{con} with $L$ derivatives. The ball $B_R$ may be dilated during these change of variables, but is contained in a ball of radius $C R$ for some constant $C=C_S$. By applying Theorem \ref{thm:improv1}, we obtain Theorem \ref{thm:imp}. 

We are now ready to deduce Theorem \ref{thm:Guth2} for $q>2p'$. First, Theorem \ref{thm:imp} immediately yields the restricted strong type $(p_0,q_0)=(13/5, 13/4)$ estimate
\[
\norm{E_S \chi_E}_{L^{3.25}(B_R)} \leq C_{S,\epsilon} R^\epsilon \norm{\chi_E}_{L^{13/5}(S)}
\]
for any measurable set $E\subset S$. Observe that $q_0= 2p_0'$. By real interpolation with the trivial $L^1 \to L^{\infty}$ estimate, we obtain strong type estimates
\begin{equation*}
\norm{E_S f}_{L^{q}(B_R)} \leq C_\epsilon R^\epsilon \norm{f}_{L^{p}(S)}
\end{equation*}
whenever $q>3.25$ and $q\geq  2p'$. Finally, we apply the epsilon removal lemma, Theorem \ref{thm:epre}, which gives Theorem \ref{thm:Guth2} for $q>2p'$.

\subsection{Bilinear argument for the case $q=2p'$}\label{sec:bil}
Following \cite{TVV}, but restricting only to smooth phases, we say that a function $h:B^2_1(0)\to \R$ is elliptic if $h$ is smooth, $h(0)=\partial h(0)=0$, and the eigenvalues of $\partial^2 h(x)$ lie in $[1-\epsilon_0,1+\epsilon_0]$ for some $0<\epsilon_0\ll 1$ for all $x\in B^2_1(0)$. We say that a surface $S$ is elliptic if $S$ is contained in the graph of an elliptic defining function $h$. 

For the proof of Theorem \ref{thm:Guth2}, it is enough to work with elliptic surfaces by the parabolic rescaling argument in Section \ref{sec:para}. Therefore, our goal is to prove that if $S$ is an elliptic surface, then
\begin{equation}\label{eqn:linear}
\norm{E_S f}_{L^q(\R^3)} \leq C \norm{f}_{L^p(S)}
\end{equation}
for $q>3.25$ and $q = 2p'$. For this, we employ a bilinear interpolation argument as in the proof of \cite[Theorem 4.1]{TVV}; see also \cite[Section 5.2]{LRS}.

Assume that $f_1$ and $f_2$ are supported in $O(1)$-separated caps $S_1$ and $S_2$, respectively, contained in an elliptic surface $S$. 
Note that \eqref{eqn:linear} implies bilinear estimates by Cauchy-Schwarz; 
\begin{equation}\label{eqn:bil}
\norm{E_S f_1 E_S f_2}_{L^{q/2}(\R^3)} \leq C \norm{f_1}_{L^{p}(S)}\norm{f_2}_{L^{p}(S)}.
\end{equation}
We say that $(1/p,1/q)$ is a bilinear pair if \eqref{eqn:bil} holds for all elliptic surfaces. Let 
\[Q = \{ (1/p,1/q)\in [0,1]^2 : q>3.25, q>2p' \}.\]
In Section \ref{sec:para}, we verified \eqref{eqn:linear} for $(1/p,1/q)\in Q$ for any compact $C^\infty$ surface $S$ with strictly positive second fundamental form. Therefore, we know that each $(1/p,1/q)\in Q$ is a bilinear pair.

Fix $q>3.25$ and $q=2p'$. In order to prove the linear estimate \eqref{eqn:linear} with this pair of exponents, it is enough to verify that there exists $\delta>0$ such that $(1/\tilde{p},1/\tilde{q})$ is a bilinear pair whenever $(1/\tilde{p},1/\tilde{q}) \in B^2_\delta(1/p,1/q)$; see \cite[Theorem 2.2]{TVV}. 

Note that the transversality of the caps $S_1$ and $S_2$ allows bilinear pairs $(1/p,1/q)$ even for some $q<2p'$. In particular, we may take a bilinear pair $(1/p_0,1/q_0)=(7/12,1/4)$ from \cite[Theorem 2.3]{TVV}; see also \cite{MVV}. We can choose a sufficiently small $\delta=\delta_{q}$ so that, for each $(1/\tilde{p},1/\tilde{q}) \in B^2_\delta(1/p,1/q)$, the line through $(1/\tilde{p},1/\tilde{q})$ and $(1/p_0,1/q_0)$ intersects $Q$ . In other words, there is a bilinear pair $(1/p_1,1/q_1)\in Q$ such that 
\[ (1/\tilde{p},1/\tilde{q}) = (1-\theta)(1/p_0,1/q_0) + \theta(1/p_1,1/q_1)\] for some $\theta\in (0,1]$. Thus, bilinear interpolation (see e.g. \cite{BL}) implies that $(1/\tilde{p},1/\tilde{q})$ is a bilinear pair, which completes the proof of Theorem \ref{thm:Guth2}.

\section{Sketch of the proof of Theorem \ref{thm:hyp2}}\label{sec:hyp}
Cho and Lee \cite{CL} obtained the following result based on the polynomial partitioning techniques from \cite{Guth}.
\begin{thm}[Cho and Lee]\label{thm:hyp} Let $S$ be a compact quadratic surface with strictly negative Gaussian curvature in $\R^3$. Then, for all $q>3.25$ and $p=q$,
\begin{equation}\label{eqn:hyp}
\norm{E_S f}_{L^q(\R^3)} \leq C \norm{f}_{L^p(S)}.
\end{equation}
\end{thm}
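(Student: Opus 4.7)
The plan is to follow the proof of Theorem \ref{thm:Guth2} in the regime $q > 2p'$, replacing Guth's broad estimate (Theorem \ref{thm:broad}) by the Cho--Lee broad estimate \cite[Theorem 3.3]{CL}. Since the Cho--Lee estimate has the same shape as Theorem \ref{thm:broad} --- with $L^{3.25}$ on the left and $\norm{f}_2^{12/13}\norm{f}_\infty^{1/13}$ on the right --- essentially the entire argument of Section \ref{sec:Pr} up through the strong $R^\epsilon$-loss estimates transfers with only minor modifications; only the bilinear endpoint step from Section \ref{sec:bil} will fail, which is precisely why Theorem \ref{thm:hyp2} is stated with $q > 2p'$ rather than $q \geq 2p'$.

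First I would establish the local bound
\[
\norm{E_S f}_{L^{3.25}(B_R)} \leq C_{S,\epsilon} R^\epsilon \norm{f}_{L^2(S)}^{10/13} \norm{f}_{L^\infty(S)}^{3/13}
\]
by induction on $R$, following the proof of Theorem \ref{thm:improv1}. The broad contribution is controlled by \cite[Theorem 3.3]{CL}, while the narrow contribution $\sum_\tau |E_S f_\tau|$ is handled by parabolic rescaling. Here the quadratic hypothesis is convenient: a compact quadratic surface with strictly negative Gaussian curvature may be affinely normalized to the model form $x_3 = x_1 x_2$ (equivalently $x_3 = x_1^2 - x_2^2$), and this model surface is globally invariant under a natural family of anisotropic linear rescalings. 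One therefore obtains direct analogs of Lemmas \ref{lem:resc} and \ref{lem:para} with the same scaling factor $r^{1+\theta - 4/q}$. At the critical pair $(q,\theta) = (13/4, 3/13)$ this exponent equals $1 + 3/13 - 16/13 = 0$, so the induction closes exactly as in the elliptic case, saved by the factor $K^{-\epsilon/2}$ coming from performing the broad/narrow split at scale $\epsilon/2$.

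Next I would deduce the restricted-strong type $(13/5, 13/4)$ bound with $R^\epsilon$ loss, interpolate against the trivial $L^1 \to L^\infty$ estimate to obtain
\[
\norm{E_S f}_{L^q(B_R)} \leq C_\epsilon R^\epsilon \norm{f}_{L^p(S)} \qquad \text{for all } q > 3.25,\ q \geq 2p',
\]
and then apply the epsilon removal lemma (Theorem \ref{thm:epre}) to pass from local to global bounds for $q > 2p'$. The main obstacle --- and the reason one cannot reach $q = 2p'$ --- is that the bilinear interpolation trick of Section \ref{sec:bil} does not extend to the hyperbolic setting: for surfaces of negative Gaussian curvature the available bilinear extension estimates require a stronger transversality condition than mere $O(1)$-separation of caps (see \cite{Lee}, \cite{Vargas}), so one cannot freely produce a bilinear pair arbitrarily close to $(1/p, 1/q)$ with $q = 2p'$. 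Thus the scaling endpoint is genuinely lost in this approach, matching the introductory remark that sharp $q = 2p'$ estimates for the hyperbolic paraboloid are presently known only in the Stein--Tomas range.
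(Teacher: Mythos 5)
Your overall strategy --- obtaining the result from the Cho--Lee broad estimate \cite[Theorem 3.3]{CL} via an induction on scales modelled on Section \ref{sec:Pr} --- is what the paper has in mind for the stronger Theorem \ref{thm:hyp2}, which contains the present statement as a special case (note that $p=q>3.25$ forces $q>2p'$); the paper itself takes Theorem \ref{thm:hyp} as a result of \cite{CL} and only sketches the modifications. However, there is a genuine gap in your claim that the induction closes at the endpoint $\theta=3/13$ and yields the restricted strong type $(13/5,13/4)$ bound. The paper explicitly excludes $\theta=3/13$ in Theorem \ref{thm:imph}, and the reason is exactly what your proposal overlooks: the notion of ``broad'' in \cite{CL} is not the same as in \cite{Guth}. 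Bilinear estimates on negatively curved surfaces require a \emph{stronger} separation condition than $O(1)$ angular separation of caps (the hyperbolic paraboloid contains lines), so the narrow contribution in \cite{CL} includes not only the $K^{-1}$-caps $\tau$ but also thin strips of dimensions $1\times K^{-1}$. The anisotropic rescaling adapted to such a strip contracts $B_R$ in only one direction and does \emph{not} shrink it to a ball of radius much less than $R$; consequently the geometric gain that closes the induction at $\theta=3/13$ in the elliptic case is absent for the strip contribution, and your assertion that ``the induction closes exactly as in the elliptic case'' is false at $\theta=3/13$.

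Fortunately the conclusion is still within reach. For any fixed $\theta>3/13$ the strip contribution can absorb the loss, giving $\norm{E_Sf}_{L^{3.25}(B_R)}\leq C_{\theta,\epsilon}R^\epsilon\norm{f}_{L^2(S)}^{1-\theta}\norm{f}_{L^\infty(S)}^{\theta}$ as in Theorem \ref{thm:imph}. This yields restricted strong type $(p_0,13/4)$ with $p_0>13/5$, and interpolating against $L^1\to L^\infty$ followed by epsilon removal gives global estimates for all $q>3.25$ and $q>2p'$ strictly, which covers $p=q>3.25$. You should therefore replace $\theta=3/13$ by $\theta>3/13$ throughout, and incorporate the modified broad/narrow decomposition of \cite{CL} together with the additional strip-rescaling step in the induction on $R$; as written, the proposal overstates what the parabolic rescaling argument delivers in the hyperbolic setting.
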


Theorem \ref{thm:hyp2} slightly improves the range of $p$ of Theorem \ref{thm:hyp}. It requires a few modifications of the proof of Theorem \ref{thm:hyp} analogous to those made in Section \ref{sec:Pr}. In fact, a further minor modification is necessary since the definition of broad points in \cite{CL} is slightly different due to the need of a stronger separation condition for bilinear estimates. In particular, when doing an induction on $R$, one needs to perform an additional scaling associated with thin strips of dimensions $1\times K^{-1}$. Nevertheless, arguing as in Section \ref{sec:Pr}, it can be shown that \cite[Theorem 3.3]{CL}, an estimate on broad points, yields 
\begin{thm}\label{thm:imph} Let $S\subset \R^3$ be the graph of $h(\omega_1,\omega_2) = \omega_1 \omega_2$ over the unit cube centered at the origin and $3/13<\theta\leq 1$. Then for all $\epsilon>0$ and radius $R$, the extension operator $E_S$ obeys the inequality 
\[
\norm{E_S f}_{L^{3.25}(B_R)} \leq C_{\theta,\epsilon} R^\epsilon \norm{f}_{L^2(S)}^{1-\theta} \norm{f}_{L^\infty(S)}^{\theta}.
\]
\end{thm}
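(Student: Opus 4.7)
The plan is to adapt the proof of Theorem \ref{thm:improv1}, replacing Theorem \ref{thm:broad} with the Cho-Lee broad estimate \cite[Theorem 3.3]{CL} and enlarging the narrow/broad decomposition to reflect the stronger transversality needed for bilinear estimates on negatively curved surfaces. In this setting, the narrow part involves both standard $K^{-1}\times K^{-1}$ caps $\tau$ and thin strips $\sigma$ of dimensions $1\times K^{-1}$ aligned with the null directions of $h(\omega_1,\omega_2)=\omega_1\omega_2$, yielding an analog of \eqref{eqn:broint}:
\[ |E_Sf(x)|\leq \max\!\Bigl(\operatorname{Br}_{K^{-\epsilon}}E_Sf(x),\ K^\epsilon\max_\tau|E_Sf_\tau(x)|,\ K^\epsilon\max_\sigma|E_Sf_\sigma(x)|\Bigr). \]

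Fix $\theta\in(3/13,1]$ and induct on $R$ the target bound $C_{\theta,\epsilon}R^\epsilon\norm{f}_2^{1-\theta}\norm{f}_\infty^\theta$ for $\norm{E_Sf}_{L^{3.25}(B_R)}$, with $C_{\theta,\epsilon}$ to be chosen sufficiently large. The broad contribution is handled as in Theorem \ref{thm:improv1}: applying \cite[Theorem 3.3]{CL} yields a bound of the form $C_\epsilon R^\epsilon\norm{f}_2^{12/13}\norm{f}_\infty^{1/13}$, and since $\theta>1/13$ and $\norm{f}_2\leq C_S\norm{f}_\infty$, this can be absorbed into half the target. For each cap $\tau$, the symmetric parabolic rescaling $\omega=\omega_0+K^{-1}\eta$ on $h(\omega)=\omega_1\omega_2$ preserves (after dropping a linear term in $\R^3$) the hyperbolic paraboloid and shrinks $B_R$ to a ball of radius $\sim R/K<R/2$; this is the direct analog of Lemma \ref{lem:para}. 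Invoking the induction hypothesis at this smaller radius and summing over $\sim K^2$ caps via the $\ell^2\hookrightarrow\ell^{3.25(1-\theta)}$ embedding produces a favorable factor $K^{3/13-\theta}$, which is small for $\theta>3/13$.

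The new ingredient, and main obstacle, is the strip contribution. For each strip $\sigma$ the rescaling $\omega=(\eta_1,\omega_2^0+K^{-1}\eta_2)$ maps $\sigma$ to a unit square and preserves $h_1(\eta)=\eta_1\eta_2$ after a linear change of coordinates, but the induced transformation $A^T$ on physical space is anisotropic: $|\det A^T|=K^{-2}$ and $A^T(B_R)$ is contained in a thin slab of dimensions $\sim R\times R/K\times R/K$, whose Euclidean diameter remains of order $R$. To exploit the induction, one would cover this slab by $O(1)$ Euclidean balls of radius $\sim R/3<R/2$ and apply the induction hypothesis on each. Combined with the Jacobian factor $K^{-1+2/q}=K^{-5/13}$ and the norm inflation $\norm{g}_{L^2(S_1)}=K^{1/2}\norm{f_\sigma}_{L^2(S_0)}$, a short computation yields a per-strip bound with favorable factor $K^{(3-13\theta)/26}$. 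Summing over the $\sim K$ strips (via the $\ell^2$ embedding, or a H\"older variant when $\theta>5/13$) and absorbing the $K^\epsilon$ from the narrow decomposition leaves a factor $K^{c(\theta)+\epsilon}$ with $c(\theta)<0$ for every $\theta>3/13$; this closes the induction for $\epsilon$ sufficiently small. The failure at $\theta=3/13$ is the crux of the difficulty and mirrors the loss in bilinear separation that prevents Theorem \ref{thm:hyp2} from holding on the scaling line $q=2p'$.
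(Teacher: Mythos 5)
Your proposal is correct and takes essentially the same route that the paper sketches (the paper gives only a brief outline, not a full proof): adapt the induction from Theorem~\ref{thm:improv1}, enlarge the narrow decomposition to include $1\times K^{-1}$ strips arising from the Cho--Lee notion of broad points, and observe that the strip rescaling is the self-similar anisotropic scaling $(x_1+\omega_2^0 x_3,\,K^{-1}x_2,\,K^{-1}x_3)$ with determinant $K^{-2}$, which maps $B_R$ into a slab of Euclidean diameter $\sim R$ rather than shrinking it. Your arithmetic is right: the combined Jacobian and norm-inflation factor per strip is $K^{(3-13\theta)/26}$ in $L^{13/4}$-norm terms, which beats the $K^{\epsilon/2}$ loss from the broad/narrow decomposition exactly when $\theta>3/13$, matching the paper's remark that the endpoint fails because the strip scaling provides no $R^\epsilon$ gain. (One small notational wrinkle: if $g$ is defined with the 2D Jacobian prefactor $K^{-1}$ as in Lemma~\ref{lem:resc}, then $\norm{g}_{L^2}=K^{-1/2}\norm{f_\sigma}_{L^2}$ rather than $K^{1/2}\norm{f_\sigma}_{L^2}$; you evidently track the $K^{-1}$ separately as part of your ``$K^{-1+2/q}$'' Jacobian bookkeeping, and the net exponent comes out the same.)
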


This is an analogue of Theorem \ref{thm:improv1}. Note that the limiting case $\theta = 3/13$ is excluded. This is due to the additional scaling which does not shrink a ball $B_R$ to a ball of much smaller radius. However, Theorem \ref{thm:imph} is strong enough to imply Theorem \ref{thm:hyp2}.

Currently, we do not know how to extend the result to the scaling line $q=2p'$. The situation is somewhat different from the case of elliptic surfaces. In particular, when $S$ is the hyperbolic paraboloid, the bilinear estimate \eqref{eqn:bil} fails to hold for any $q<2p'$ without a stronger separation condition on $f_1$ and $f_2$; see \cite{Lee} and \cite{Vargas}. This is related to the fact that the hyperbolic paraboloid contains line segments.

\section{Some refinements in higher dimensions}\label{sec:high}
Let $d\geq 2$. Following \cite{Guth2}, we consider $L^q(B^{d-1}) \to L^p(\R^d)$ extension estimates (note the change of the role of $p$ and $q$) for the operator \[Ef(x)=\int_{B^{d-1}} e^{i(x_1 \omega_1 + \cdots + x_{d-1}\omega_{d-1} + x_d|\omega|^2)} f(\omega) d\omega,\]
where $B^{d-1}$ is the unit ball in $\R^{d-1}$. The study of the operator $E_S$ for the truncated paraboloid in $\R^d$ reduces to the study of the operator $E$, and vice versa.

Here is the basic setup for the $k$-broad inequality in \cite{Guth2} (see also \cite{BG}). Consider a covering of the unit ball $B^{d-1}$ by a collection of finitely many overlapping balls $\tau$ of radius $K^{-1}$ for some $1\ll K\ll R$. Then decompose $f$ as $f=\sum_\tau f_\tau$ where $f_\tau$ is supported on $\tau$. Let $n(\omega)\in S^{d-1}$ be a normal vector for the paraboloid in $\R^d$ at the point $(\omega,|\omega|^2)$. For a given subspace $V\subset \R^d$, we write $\tau \notin V$ if $\angle (n(\omega),v) > K^{-1}$ for all $\omega\in \tau$ and non-zero vectors $v\in V$. Otherwise, we write $\tau\in V$.

Next, consider a covering of $B_R$ by a collection of finitely many overlapping balls $B_{K^2}$, and then study $\int_{B_{K^2}} |\sum_\tau Ef_\tau|^p$ for each fixed $B_{K^2}$. Let $V\subset \R^d$ be a $(k-1)$-dimensional subspace. Then one may consider the ``broad" part $\int_{B_{K^2}} |\sum_{\tau\notin V} Ef_\tau|^p$ and the ``narrow" part $\int_{B_{K^2}} |\sum_{\tau\in V} Ef_\tau|^p$ separately. More precisely, Guth \cite{Guth} defined the $k$-broad part of $\norm{Ef}_{L^p(B_R)}^p$ by 
\[ \norm{Ef}_{\operatorname{BL}^p_{k,A}(B_R)}^p := \sum_{B_{K^2}\subset B_R} \minab{V_1,V_2, \cdots, V_A}{(k-1)-\text{subspaces of } \R^d} \max_{\tau\notin V_a \text{ for all } a} \int_{B_{K^2}} |Ef_\tau|^p \]
for a parameter $A$ and proved the following using polynomial partitioning.
\begin{thm}[Guth]\label{thm:kbroad} For any $2\leq k \leq d$, and any $\epsilon>0$, there is a constant $A$ so that the following holds (for any value of $K$):
\begin{equation}\label{eqn:kbr}
\norm{Ef}_{\operatorname{BL}^p_{k,A}(B_R)} \les_{K,\epsilon} R^\epsilon \norm{f}_{L^q(B^{d-1})},
\end{equation}
for $p\geq \frac{2(d+k)}{d+k-2}$ and $q\geq 2$.
\end{thm}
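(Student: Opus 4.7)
The plan is to prove Theorem \ref{thm:kbroad} by Guth's polynomial partitioning scheme, with an outer induction on the radius $R$ nested with an auxiliary induction on the $L^2$-energy $\norm{f}_2^2$ (in the spirit of Theorem \ref{thm:broad}). By interpolation with the trivial $L^\infty$ bound and the monotonicity of $L^q$ on the unit ball, it is enough to establish the endpoint $p=\frac{2(d+k)}{d+k-2}$, $q=2$. As preparation I would perform a wave packet decomposition $f=\sum_\theta f_\theta$ over $R^{-1/2}$-caps $\theta\subset B^{d-1}$, so that each $Ef_\theta$ is essentially concentrated on a tube $T_\theta$ of length $R$, width $R^{1/2+\delta}$, pointing in the direction $n(\omega_\theta)$ normal to the paraboloid.

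Next I would invoke polynomial partitioning of degree $D=R^{\delta'}$ applied to $|Ef|^p$, producing a polynomial $P$ whose complement $\R^d\setminus Z(P)$ splits into $\sim D^d$ cells $O_i$ carrying comparable $k$-broad mass. Either most of the $k$-broad mass lies in the interiors of the cells (\emph{cellular alternative}) or it is concentrated in the thickened zero set $N=N_{R^{1/2+\delta}}(Z(P))$ (\emph{algebraic alternative}). In the cellular alternative, each wave packet enters only $O(D)$ cells, so applying the $R$-inductive hypothesis inside each cell at radius $\sim R/D$, together with the min-over-$A$-subspaces structure of the $k$-broad norm and the sum over the $D^d$ cells, closes the induction with a positive power gain in $D$ at the endpoint exponent. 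The algebraic alternative then splits into a transverse subcase, where tubes cross $N$ in $O(D)$ essentially disjoint segments and are treated as in the cellular case, and a tangential subcase.

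The tangential subcase is the main obstacle: here the mass is carried by packets whose tubes sit inside $N$ at angle $\lesssim R^{-1/2+\delta}$ with $Z(P)$. The crucial tool is a transverse-equidistribution lemma, asserting that packets tangent to an algebraic hypersurface spread out evenly transversely, which yields an $L^2$-bound of the form $\sum_{\theta\text{ tang}}\int_N |Ef_\theta|^2\lesssim R^{O(\delta)}\norm{f}_2^2$. Combined with the freedom of the $k$-broad norm to discard packets whose normals land in any one of the $A$ chosen $(k-1)$-subspaces, this reduces the tangential contribution to an $L^2$-quantity and invokes the auxiliary induction on $\norm{f}_2^2$. Balancing the parameters $A=A(\epsilon)$, $D=D(\epsilon)$, $\delta'\ll\epsilon$ and $\delta\ll\delta'$ absorbs the cumulative losses into $R^\epsilon$. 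The delicate points are proving the transverse-equidistribution estimate and tracking how the parameter $A$ must be enlarged under iteration so that both inductions close simultaneously; this bookkeeping, rather than any single algebraic computation, is the genuine technical heart of the argument.
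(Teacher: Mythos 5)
The paper does not supply a proof of Theorem \ref{thm:kbroad}: it is quoted verbatim from Guth's second polynomial-partitioning paper \cite{Guth2} (the later in-text citation ``\cite[Theorem 1.5]{Guth}'' is a misprint for \cite{Guth2}), so there is no internal argument to compare against. Your sketch reproduces the general outline of Guth's method --- wave packets, degree-$D$ partitioning, cellular versus algebraic, transverse versus tangential, transverse equidistribution --- and the reduction to $q=2$ and to the endpoint $p=\frac{2(d+k)}{d+k-2}$ is sound. One inaccuracy worth flagging: the cellular step is not an induction ``at radius $\sim R/D$''; the cells $O_i$ of $\R^d\setminus Z(P)$ have no controlled diameter. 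What closes that step is the $L^2$-mass decrement --- a tube meets $O(D)$ of the $\sim D^d$ cells, so a typical cell carries only a $D^{-(d-1)}$ fraction of $\norm{f}_2^2$ --- combined with a two-parameter induction on $R$ and the mass, rather than any spatial scale gain.

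The genuine gap is in your tangential case. Your reduction, that the $k$-broad norm lets you discard tangential packets ``whose normals land in one of the $A$ chosen $(k-1)$-subspaces,'' implicitly assumes the tangent plane of $Z(P)$ fits inside a $(k-1)$-dimensional $V_a$. But $Z(P)$ is a hypersurface with $(d-1)$-dimensional tangent planes, so this works only when $k-1=d-1$, i.e.\ $k=d$ --- the multilinear case already covered by \cite{BCT}. For $2\le k<d$, which is exactly the new content of \cite{Guth2}, the tangential wave packets still span too many directions and cannot be discarded in one step. Guth's actual argument runs a nested induction on the dimension of an algebraic variety: packets tangent to $Z(P)$ are handled by a further polynomial partitioning \emph{inside} $Z(P)$, pushing them onto a lower-dimensional variety, and this descent is iterated --- with transverse equidistribution invoked at each stage --- until the variety dimension drops to $k-1$, where the $k$-broad structure finally eliminates the remaining packets. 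That nested dimensional induction, not the bookkeeping of $A$, is the real technical heart of \cite{Guth2}, and it is absent from your outline.
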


We state a version of \cite[Proposition 9.1]{Guth2} that derives extension estimates from the $k$-broad inequalities. We consider the regime $q\leq p$ which seems to be more natural in view of the restriction conjecture. 
\begin{prop}\label{prop:guth2} Suppose that for all $K,\epsilon,$ the $k$-broad inequality \eqref{eqn:kbr} holds for some $2\leq q\leq p\leq 2\cdot \frac{k-1}{k-2}$. If $p$ is in the range 
\begin{equation}\label{eqn:range}
p \geq \frac{d+1}{\frac{2d-k}{2}-\frac{d-k+1}{q}},
\end{equation}
then $E$ obeys 
\[ \norm{Ef}_{L^p(B_R)} \les_\epsilon R^\epsilon \norm{f}_{L^q}. \]
\end{prop}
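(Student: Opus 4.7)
The plan is to follow the Bourgain--Guth induction-on-scales argument from \cite{BG,Guth2}, adapted to the asymmetric regime $q\le p$. I would induct on the radius $R$, assuming $\norm{Ef}_{L^p(B_{R'})}\le C_\epsilon R'^{\epsilon}\norm{f}_{L^q}$ for every $R'\le R/2$ and deducing the same bound at scale $R$. Fix a parameter $K=K(\epsilon)\gg 1$, cover $B_R$ by balls $B_{K^2}$, and on each such ball write $Ef=\sum_\tau Ef_\tau$. For any $A$-tuple of $(k-1)$-dimensional subspaces $V_1,\dots,V_A\subset\R^d$, the triangle inequality together with the $\ell^1\hookrightarrow\ell^p$ embedding yields
\[
\int_{B_{K^2}}\abs{Ef}^p\lesssim K^{Cp}\max_{\tau\notin V_a\,\forall a}\int_{B_{K^2}}\abs{Ef_\tau}^p + A^{p-1}\sum_a\int_{B_{K^2}}\bigg|\sum_{\tau\in V_a}Ef_\tau\bigg|^p.
\]
Summing over $B_{K^2}\subset B_R$ and minimizing over the $V_a$'s (which may depend on $B_{K^2}$) in the first term identifies it with $K^{Cp}\norm{Ef}_{\operatorname{BL}^p_{k,A}(B_R)}^p$; by Theorem \ref{thm:kbroad} this is $\lesssim K^{Cp}R^{p\epsilon}\norm{f}_{L^q}^p$, which is harmless since $K$ depends only on $\epsilon$.

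The narrow piece is the main content. For each $(k-1)$-dimensional subspace $V$, the caps with $\tau\in V$ concentrate in a $K^{-1}$-neighborhood of a $(k-2)$-dimensional affine subspace of $B^{d-1}$, i.e., a slab of dimensions $\sim 1^{k-2}\times K^{-(d-k+1)}$. Discretizing the choice of $V$ to a $K^{O(1)}$-size collection, it suffices to bound $\norm{Ef_V}_{L^p(B_R)}$ for a single slab, where $f_V$ is $f$ restricted to the slab. I would apply a parabolic rescaling matched to the slab: scale the $d-k+1$ thin frequency directions by $K$, absorb the resulting linear modulation into the spatial variables, and perform the dual spatial change of variables. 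This maps $Ef_V$ on $B_R$ to $Eg_V$ on a ball of radius $\sim R/K$ with $g_V$ supported in $B^{d-1}$; the Jacobians give $\norm{g_V}_{L^q}\sim K^{(d-k+1)/q}\norm{f_V}_{L^q}$ and an explicit spatial $L^p$-Jacobian coming from the $K^{-1}$-scaling in $d-k+1$ tangent directions and $K^{-2}$-scaling in the paraboloid direction. Invoking the induction hypothesis on the rescaled ball and summing over the $K^{O(1)}$ discretized subspaces $V$ (using $\norm{f_V}_{L^q}\le\norm{f}_{L^q}$) produces
\[
\text{narrow} \lesssim C_\epsilon^p\,K^{-\lambda p + O(1)}\,R^{p\epsilon}\norm{f}_{L^q}^p
\]
for some $\lambda=\lambda(k,d,p,q)$.

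The main obstacle is the exponent accounting that determines $\lambda$. Combining the spatial Jacobian ($\sim K^{(d+1)/p}$), the frequency $L^q$-Jacobian loss $K^{-(d-k+1)/q}$, and the $K^{O(1)}$ losses from the discretization over $V$ and from the $\ell^q\hookrightarrow\ell^p$ passage converting slab-by-slab $L^q$ norms back to $\norm{f}_{L^q}$, a direct computation shows that $\lambda>0$ precisely when
\[
p\ge\frac{d+1}{\frac{2d-k}{2}-\frac{d-k+1}{q}},
\]
which is exactly \eqref{eqn:range}. The upper constraint $p\le 2(k-1)/(k-2)$ is what guarantees that the $\ell^q\hookrightarrow\ell^p$ step does not swallow the gain. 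Once $\lambda>0$ is confirmed, taking $K$ large enough makes $K^{-\lambda p+O(1)}\le 1/2$, and the induction closes with $C_\epsilon$ chosen larger than $K^C$.
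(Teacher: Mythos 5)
There is a genuine gap in the treatment of the narrow part, and it lies exactly where the paper's argument has its key ingredient. Your proposal tries to handle the narrow contribution by an anisotropic ``slab rescaling'': the narrow caps sit in a slab of dimensions $1^{k-2}\times K^{-(d-k+1)}$, and you propose to scale the $d-k+1$ thin frequency directions by $K$ to map this slab onto $B^{d-1}$ and then invoke the induction hypothesis. For the paraboloid this does not work once $k\geq 3$. Writing $\omega=(\omega',\omega'')$ with $\omega'\in\R^{k-2}$ tangent to the slab and $\omega''\in\R^{d-k+1}$ thin, the substitution $\omega''=K^{-1}\eta''$ turns $E f_V(x)$ into
\[
K^{-(d-k+1)}\int e^{i\left(x'\cdot\eta'+K^{-1}x''\cdot\eta''+x_d\left(|\eta'|^2+K^{-2}|\eta''|^2\right)\right)}g(\eta)\,d\eta,
\]
and there is no dual spatial change of variables that turns $x_d\bigl(|\eta'|^2+K^{-2}|\eta''|^2\bigr)$ back into $y_d|\eta|^2$: the $|\eta'|^2$ block wants $y_d=x_d$ while the $|\eta''|^2$ block wants $y_d=K^{-2}x_d$. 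The rescaled phase is that of a highly anisotropic quadratic surface, not the paraboloid, so the operator $E$ in the induction hypothesis is not what you get after rescaling. (For $k=2$ the slab degenerates to a $K^{-1}$-ball, the rescaling becomes the usual isotropic parabolic rescaling, and your scheme is fine; it is precisely when the slab has nontrivial $(k-2)$-dimensional extent that it fails.) Your ``direct computation of $\lambda$'' cannot be carried out consistently: the spatial Jacobian you quote, $K^{(d+1)/p}$, is the one for isotropic parabolic rescaling of a $K^{-1}$-ball, not the one for the slab.

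The paper (following \cite[Proposition~9.1]{Guth2}) avoids this entirely by never rescaling the slab. Instead it applies Bourgain's $\ell^2$-decoupling inequality on each $B_{K^2}$ to the $O(K^{k-2})$ caps $\tau$ in the slab, obtaining an $\ell^2$-average over individual $K^{-1}$-caps; one then uses H\"older to pass from $\ell^2$ to $\ell^q$ (the paper's specific modification of Guth's proof, which uses $\ell^p$ in the $p\leq q$ regime), sums over the balls $B_{K^2}\subset B_R$ via Minkowski, and only then rescales each individual $K^{-1}$-cap parabolically, which is an isotropic rescaling and genuinely maps back to the paraboloid. The decoupling step is what supplies the $K^{(k-2)(\frac12-\frac1q)p}$ factor in the exponent accounting, and it is the place the upper constraint $p\leq 2(k-1)/(k-2)$ is actually used (so that decoupling at $L^p$ is available). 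Without decoupling, neither the quantitative gain nor the reduction to isotropic rescaling is present, and the induction cannot close.
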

Proposition \ref{prop:guth2} follows from a minor modification of the proof of \cite[Proposition 9.1]{Guth2} for the regime $q \geq p$. Therefore, we shall focus only on the part that we need to modify. Let us first sketch the proof of \cite[Proposition 9.1]{Guth2}. The $k$-broad inequality allows one to reduce the problem to the estimation of the ``$k$-narrow" part of $\norm{Ef}_{L^p(B_R)}^p$, where only $O(K^{k-2})$ many balls $\tau$ contribute to the sum $\sum_\tau E f_\tau$. After applying the $l^2$-decoupling inequality due to Bourgain \cite{BoD} to this narrow contribution (see also \cite{BD}), H\"{o}lder's inequality is used to replace the $l^2$-norm by the $l^p$-norm in order to facilitate the summation of 
\[ \left( \sum_{\tau \in V_a } \left( \int W_{B_{K^2}} |Ef_\tau|^p \right) ^{2/p} \right)^{p/2}\]
over those balls $B_{K^2} \subset B_R$. Here, $W_{B_{K^2}}$ is a weight which is roughly the characteristic function of the ball $B_{K^2}$.

Our modification for the proof of Proposition \ref{prop:guth2} lies on the ``$k$-narrow" part. After using $l^2$-decoupling, we replace the $l^2$-norm by the $l^q$-norm, which is suggested by the $L^q \to L^p$ statement. This replaces \cite[Equation (9.7)]{Guth2} with
\[ \int_{B_{K^2}} |\sum_{\tau \in {V_a}} E f_\tau |^p \leq C_\delta K^\delta K^{(k-2)(\frac{1}{2}-\frac{1}{q})p} \left( \sum_{\tau } \left( \int W_{B_{K^2}} |Ef_\tau|^p \right) ^{q/p} \right)^{p/q}\]
for some $0<\delta<\epsilon$. After the summation over $1\leq a\leq A$, we sum the above expression over those balls $B_{K^2}\subset B_R$ using Minkowski's inequality. The remainder of the proof involves the induction on scale argument using parabolic rescaling. The induction closes when \eqref{eqn:range} is satisfied.

Let us put the condition \eqref{eqn:range} in context. When $q=2$, the condition becomes the familiar Stein-Tomas range $p \geq \frac{2(d+1)}{d-1}$. When $k=2$, the condition \eqref{eqn:range} is equivalent to the necessary condition $p \geq \frac{d+1}{d-1}q'$ for the $L^q(B^{d-1}) \to L^p(\R^d)$ estimate for the extension operator $E$. When $q=p$, the condition \eqref{eqn:range} is identical to that in \cite{Guth2} for the regime $p\leq q \leq \infty$.

For $d\geq 2$ and each integer $2 \leq k \leq \frac{d}{2}+1$, define
\[ \bar{q}(k,d) := \frac{2(d-k+1)(d+k)}{(d-k+1)(d+k)-2(k-1)} .\]
This $\bar{q}(k,d)$ is found by setting the right hand side of \eqref{eqn:range} equal to $\bar{p}(k,d) := \frac{2(d+k)}{d+k-2}$ and then solving the equation for $q$. Note that $2\leq \bar{q}(k,d) < \bar{p}(k,d)$ if $2\leq k< \frac{d}{2}+1$ and $\bar{q}(k,d)=\bar{p}(k,d)$ if $k= \frac{d}{2}+1$. Therefore, the $k$-broad inequality \cite[Theorem 1.5]{Guth} and Proposition \ref{prop:guth2} yield local extension estimates
\[ \norm{Ef}_{L^{\bar{p}(k,d)}(B_R)} \leq C_\epsilon R^\epsilon \norm{f}_{L^{\bar{q}(k,d)}(B^{n-1})}, \]
which implies Theorem \ref{thm:high2} by the epsilon removal lemma, Theorem \ref{thm:epre}.

\section*{Appendix: Epsilon removal for Fourier restriction estimates}\label{sec:prep}
Let $S$ be a compact $C^\infty$ hypersurface in $\R^d$. We shall assume that $S$ is curved in the sense that the surface measure $d\sigma$ on $S$ satisfies the Fourier decay condition
\begin{equation}\label{eqn:decay}
|\widehat{d\sigma} (\xi)| \leq C (1+|\xi|)^{-\rho}
\end{equation}
for some $\rho>0$. For surfaces with non-vanishing Gaussian curvature, it is well-known that \eqref{eqn:decay} holds with the maximum decay rate $\rho= (d-1)/2$.

Tao's epsilon removal lemma \cite[Theorem 1.2]{Tao} allows one to obtain global restriction estimates from local restriction estimates of the form
\begin{equation}\label{eqn:local}
 \norm{\hat{f}|_S}_{L^q(S,d\sigma)} \leq C_{\alpha} R^\alpha \norm{f}_{L^p(B_R)} 
\end{equation}
at the expense of decreasing the exponent $p$; see also \cite{Bourgain2} and \cite{TV}. Note that \eqref{eqn:local} is the dual of the local extension estimate 
\[ 
 \norm{E_S f}_{L^{p'}(B_R)} \leq C_{\alpha} R^\alpha \norm{f}_{L^{q'}(S)}.
\]
Tao's result was stated in the case $p=q$ in \cite{Tao}, but the argument works for $p<q$ as well. We record this observation as a theorem.

\begin{thm} \label{thm:epre}
Let $1 \leq p \leq q \leq 2$ and $0<\alpha\ll 1$. Assume that we have the local restriction estimate \eqref{eqn:local} for any ball $B_R$ of radius $R$ and any smooth function $f$ supported in $B_R$. Then there is a constant $C_{d,\rho}>0$ such that we have
\begin{equation}\label{eqn:global}
 \norm{\hat{f}|_S}_{L^q(S,d\sigma)} \leq C \norm{f}_{L^s(\R^d)}  \;\;\; \text{ for } \;\;\; \frac{1}{s} > \frac{1}{p} + \frac{C_{d,\rho}}{-\ln \alpha}.
\end{equation}
\end{thm}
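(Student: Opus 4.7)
The plan is to follow Tao's sparse-balls strategy from \cite{Tao}, noting that his proof for $p=q$ extends with only cosmetic modifications to the range $p\leq q\leq 2$. There are two main ingredients: a \emph{sparse balls lemma} that uses the Fourier decay \eqref{eqn:decay} to upgrade the local estimate from a single ball to a sparse union, and a combinatorial decomposition that reduces a general $f$ to a sum of pieces supported on such sparse unions.

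Call a family $\{x_j\}\subset\R^d$ an $N$-sparse set if $|x_j - x_k|\geq N$ for $j\neq k$. The sparse balls lemma asserts: there exists $c = c(d,\rho) > 0$ such that if $N \geq R^c$, then the dual (extension) form of \eqref{eqn:local} upgrades to
\[
\norm{E_S g}_{L^{p'}(\bigcup_j B(x_j, R))} \leq C R^\alpha \norm{g}_{L^{q'}(S)}.
\]
I would prove this by a standard $T^*T$-style almost-orthogonality computation: expanding in terms of $E_S^* E_S$, the off-diagonal cross-terms involve $\widehat{d\sigma}(x_j - x_k)$, which is $O(N^{-\rho})$ by \eqref{eqn:decay}. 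Once $N^\rho$ dominates the polynomial in $R$ arising from the Schur test, the off-diagonal mass becomes summable and only the diagonal contribution, controlled by \eqref{eqn:local} applied to each $B(x_j, R)$, survives. For general $p\leq q\leq 2$ one runs this argument at an auxiliary exponent (say $L^2$) and interpolates with the trivial $L^1\to L^\infty$ bound to reach the target pair; crucially, neither the Fourier decay \eqref{eqn:decay} nor the almost-orthogonality depends on the relation between $p$ and $q$.

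The second ingredient is a combinatorial decomposition: by dyadic pigeonholing in both the mass of $|f|$ and in the scale parameter $R$, one writes $f$ as a modest sum of atoms, each supported on an $N$-sparse union of $R$-balls at a single dyadic scale. Applying the sparse balls lemma to each atom, summing via the triangle inequality, and optimizing the scale $R$ as a power of the effective support size of $f$ produces a restricted weak-type bound with the exponent gap $\tfrac{1}{s} - \tfrac{1}{p} \gtrsim 1/(-\ln\alpha)$; the logarithm in the denominator is the residual of balancing the $R^\alpha$ loss against the polynomial cost of the packing. Real interpolation with a trivial endpoint then upgrades this restricted weak-type bound to the strong-type estimate \eqref{eqn:global}.

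The main technical point to verify is the sparse balls lemma in the range $p<q$, since Tao's original argument in \cite{Tao} is phrased for $p=q$ and the $T^*T$ expansion aligns most naturally with the quantity being estimated there. The modification is routine once one observes that the almost-orthogonality step uses only \eqref{eqn:decay} and the bounded density of $d\sigma$, not the diagonality condition $p=q$. The remainder of the argument, namely the combinatorial decomposition and the optimization in $R$, proceeds essentially verbatim as in \cite{Tao}, and the constant $C_{d,\rho}$ in \eqref{eqn:global} is determined by the sparsity exponent $c(d,\rho)$ appearing in the sparse balls lemma.
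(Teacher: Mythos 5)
Your overall strategy is the same as the paper's (which defers to Tao \cite{Tao}): a sparse balls lemma proved via almost-orthogonality using the decay of $\widehat{d\sigma}$, followed by Tao's pigeonholing and optimization in $R$ to land on the $1/(-\ln\alpha)$ gap, then an interpolation step to upgrade to a strong-type bound. The appendix of the paper provides only the sparse balls lemma (Lemma~\ref{lem:localsparse}) in detail and cites Tao and Bourgain--Guth for the rest, so your description of the second stage is consistent with the paper.

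However, the way you propose to handle $p\leq q$ in the sparse balls lemma is not quite right as stated. You propose to run the almost-orthogonality at $L^2$ and then ``interpolate with the trivial $L^1\to L^\infty$ bound to reach the target pair.'' Interpolating an $L^2\to L^2$-type sparse estimate with $L^1\to L^\infty$ only gives the one-parameter family of dual exponent pairs, not the full two-parameter region $1\leq p\leq q\leq 2$, and in any case an interpolation of the final sparse-balls statement does not make use of the assumed local estimate at the target $(p,q)$. The paper's mechanism is different and worth noting: the almost-orthogonality is packaged as an auxiliary inequality
\[
\bignorm{\textstyle\sum_i e(x_i\cdot\xi)\, g_i * \widehat{\vphi_R}\big|_S}_{L^q_\xi(d\sigma)} \les R^{1/q}\left(\textstyle\sum_i\norm{g_i}_{L^q(\R^d)}^q\right)^{1/q},
\]
which is proved for $q=1$ by a trivial kernel bound and for $q=2$ by exactly the $T^*T$-style cross-term computation you describe (this is where $\widehat{d\sigma}$ decay and the sparsity enter), then interpolated in $q\in[1,2]$. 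The hypothesis \eqref{eqn:local} is then applied piecewise, with $g_i = \widehat{\tilde f_i}\big|_{N_{R^{-1}}S}$, and crucially the $\ell^p\hookrightarrow\ell^q$ embedding (which is exactly where $p\leq q$ is used) reassembles the pieces. Your proposal never invokes an $\ell^p\hookrightarrow\ell^q$ step, and as written the interpolation you suggest would not reach an arbitrary pair $p< q$. Also, your sparsity condition ``$|x_j-x_k|\geq N$ with $N\geq R^c$'' should in fact scale with the number of balls (the paper uses $|x_i-x_j|\geq (NR)^{(d-1)/\rho}$ with $N$ the cardinality), because the off-diagonal sum has $O(N^2)$ terms and the decay of $\widehat{d\sigma}$ must absorb that count; your formulation drops that dependence.
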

In fact, we may take $C_{d,\rho} = 5\ln \big((d-1)/\rho\big)$ in Theorem \ref{thm:epre}, but this is by no means optimal. Theorem \ref{thm:epre} says, in particular, that if the local estimate \eqref{eqn:local} holds for any $\alpha>0$, then the global estimate \eqref{eqn:global} holds for all $1\leq s<p$. 

It seems worth pointing out that Bourgain-Guth \cite{BG} obtained and utilized an epsilon removal result for the case $1=q< p<2$. Their result involves an additional ingredient: the Maurey-Nikishin factorization theorem. 

A main step toward Theorem \ref{thm:epre} is an extension of the local estimate \eqref{eqn:local} to a local estimate for a union of sparse balls. 
\begin{defn} Let $C(d,\rho) = (d-1)/\rho$, where $\rho$ is as in \eqref{eqn:decay}. We say that a collection of balls $\{ B_R(x_i) \}_{i=1}^N$ in $\R^d$ is sparse if $|x_i-x_j|\geq (NR)^{C(d,\rho)}$ for $i\neq j$.
\end{defn}

Given the extension of \eqref{eqn:local} for sparse balls, Theorem \ref{thm:epre} can be obtained exactly as in \cite{Tao} or \cite{BG}. Therefore, we shall be content with proving the following lemma, which is basically \cite[Lemma 3.2]{Tao}. In what follows, we write $A\les B$ if $A\leq C B$ for some constant $C>0$, which may vary from line to line.
\begin{lem}\label{lem:localsparse} Assume that the local estimate \eqref{eqn:local} holds for some $1\leq p\leq q\leq  2$. Then we have
\[
\norm{\hat{f}|_S}_{L^q(d\sigma)} \les R^\alpha \norm{f}_{L^p(\R^d)} 
\]
whenever $f$ is supported in the union of a sparse collection of balls $\{ B_R(x_i) \}_{i=1}^N$.
\end{lem}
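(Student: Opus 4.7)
My proof follows Tao. Decompose $f = \sum_{i=1}^N f_i$ with $f_i = f\chi_{B_R(x_i)}$, and translate each piece via $g_i(y) := f_i(y+x_i)$ supported in $B_R(0)$, so that $\widehat{f_i}(\xi) = e^{-2\pi i x_i\cdot\xi}\widehat{g_i}(\xi)$. The hypothesis \eqref{eqn:local} applied to each $g_i$ yields the single-ball estimate $\norm{\widehat{g_i}|_S}_{L^q(d\sigma)}\les R^\alpha\norm{f_i}_{L^p}$, but a naive triangle-inequality summation produces an unwanted factor $N^{1-1/q}$ that must be reclaimed using the sparseness condition.

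The central computation is an almost-orthogonality estimate of $TT^*$ type at the $L^2(d\sigma)$ level. Expanding,
\[
\norm{\widehat{f}|_S}_{L^2(d\sigma)}^2 = \sum_{i,j}\iint g_i(y)\overline{g_j(z)}\,\widehat{d\sigma}\bigl(z-y-(x_j-x_i)\bigr)\,dy\,dz,
\]
and one splits into diagonal ($i=j$) and off-diagonal ($i\ne j$) contributions. For the off-diagonal terms, $y,z\in B_R(0)$ and $|x_i-x_j|\ge(NR)^{(d-1)/\rho}$ force the argument of $\widehat{d\sigma}$ to have magnitude at least $\tfrac12(NR)^{(d-1)/\rho}$, so \eqref{eqn:decay} yields $|\widehat{d\sigma}|\les(NR)^{-(d-1)}$. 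Combined with H\"older's estimate $\norm{g_i}_{L^1}\les R^{d/p'}\norm{g_i}_{L^p}$ and H\"older in $i$, the off-diagonal total is $\les N^{-(d-1)+2/p'}R^{-(d-1)+2d/p'}\norm{f}_{L^p}^2$, which is dominated by the diagonal thanks to the calibration $C(d,\rho) = (d-1)/\rho$.

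Since the hypothesis concerns $L^q(d\sigma)$ rather than $L^2(d\sigma)$, an interpolation step is needed to feed into the $TT^*$ computation. Riesz--Thorin interpolation of \eqref{eqn:local} against the trivial estimate $\norm{\widehat{f}|_S}_\infty\le\norm{f}_{L^1}$ with parameter $\theta=1-q/2$ yields a local $L^{p_0}\to L^2(d\sigma)$ estimate with loss $R^{\alpha q/2}$, where $1/p_0 = q/(2p)+1-q/2$. Plugging this into the $TT^*$ argument above produces a sparse $L^{p_0}\to L^2(d\sigma)$ estimate, which upgrades to a sparse $L^{p_0}\to L^q(d\sigma)$ estimate since $d\sigma$ is a finite measure. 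A final interpolation against the trivial sparse $L^1\to L^\infty(d\sigma)$ bound recovers the sparse $L^p\to L^q(d\sigma)$ estimate with loss $R^\alpha$.

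The main technical obstacle is carefully tracking exponents through the two interpolations so that the final $R$-loss is precisely $R^\alpha$ with no residual $N$-dependence, together with verifying that the off-diagonal-versus-diagonal subordination in the $TT^*$ step holds uniformly across the full range $1\le p\le q\le 2$; the sparseness exponent $(d-1)/\rho$ is precisely calibrated for the latter task, balancing the $(NR)^{-(d-1)}$ decay from \eqref{eqn:decay} against the $R^{d/p'}$ H\"older loss in converting $L^p$ to $L^1$ on each ball.
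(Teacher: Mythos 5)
Your proof takes a genuinely different route from the paper, but it has a gap that I do not see how to close. The paper does not interpolate the final sparse estimate; instead it proves a ``sparse transfer'' inequality \eqref{eq:Tao}, namely
\[
\norm{ \textstyle\sum_i e(x_i \cdot \xi) g_i *\widehat{ \vphi_R } |_S}_{L^q_\xi(d\sigma)} \les R^{1/q} \Bigl(\sum_{i} \norm{ g_i }_{L^q(\R^d)}^q\Bigr)^{1/q},
\]
for the auxiliary operator $(g_i)_i \mapsto \sum_i e(x_i\cdot\xi)\,g_i*\widehat{\vphi_R}$, established by interpolating its $q=1$ endpoint (trivial, no sparsity) against its $q=2$ endpoint (where the $TT^*$/almost-orthogonality and the sparseness exponent enter). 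The hypothesis \eqref{eqn:local} is only invoked afterwards, via the neighborhood estimate \eqref{eqn:nbd}, applied componentwise to $g_i=\widehat{\tilde f_i}|_{N_{R^{-1}}S}$; the $R^{1/q}$ in \eqref{eq:Tao} cancels the $R^{-1/q}$ in \eqref{eqn:nbd}, and the embedding $\ell^p\hookrightarrow\ell^q$ finishes. You instead interpolate the local hypothesis first, run $TT^*$ at the $L^2(d\sigma)$ level, and then try to interpolate the resulting sparse estimate back down.

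The problem is in the last step. Your first interpolation gives a local $L^{p_0}\to L^2(d\sigma)$ estimate with $1/p_0 = \frac{q}{2p}+1-\frac{q}{2}$; since this is a convex combination of $1/p$ and $1$ with weights $q/2$ and $1-q/2$, one has $1/p \le 1/p_0 \le 1$, i.e. $p_0 \le p$ with strict inequality whenever $p>1$ and $q<2$. After the $TT^*$ step and the downgrade of the target from $L^2(d\sigma)$ to $L^q(d\sigma)$, you therefore hold a sparse $L^{p_0}\to L^q(d\sigma)$ estimate with $p_0\le p$. Riesz--Thorin interpolation of this against the trivial sparse $L^1\to L^\infty(d\sigma)$ bound produces estimates at the points $(1-\theta)(1/p_0,1/q)+\theta(1,0)$, all of which satisfy $1/\tilde p \ge 1/p_0 \ge 1/p$ and $1/\tilde q \le 1/q$. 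The desired pair $(1/p,1/q)$ satisfies $1/p\le 1/p_0$, so it lies strictly off this segment (to the left in the Riesz diagram) unless $q=2$ or $p=1$. There is no interpolation endpoint available that would move you to a larger $p$ at the same $q$, and the alternative of passing from $\norm{f}_{L^{p_0}}$ to $\norm{f}_{L^p}$ via H\"older on the support of $f$ costs a factor $(NR^d)^{1/p_0-1/p}$, reintroducing exactly the $N$-dependence the lemma must avoid. There is also a secondary inconsistency in the $TT^*$ step itself: your diagonal must be controlled by the interpolated local $L^{p_0}\to L^2(d\sigma)$ estimate (so is bounded via $\norm{f}_{L^{p_0}}$), while your off-diagonal bound is phrased via $\norm{f}_{L^p}$, and the two cannot be compared without again paying a power of $NR^d$. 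To repair the argument you would need to keep the $\ell^q$ structure inside the almost-orthogonality lemma, as the paper does, rather than forcing everything through $L^2(d\sigma)$ and interpolating at the end.
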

\begin{proof}
As in \cite{Tao}, we use of the fact that \eqref{eqn:local} implies
\begin{equation}\label{eqn:nbd}
\norm{\hat{f}|_{N_{R^{-1}} S}}_{L^q(\R^d)} \les R^{-1/q} R^\alpha \norm{f}_{L^p(\R^d)},
\end{equation}
where $N_{R^{-1}} S$ is the $R^{-1}$ neighborhood of $S$.

By the support assumption, we may write $f$ as $f(x) = \sum_{i=1}^N f_i(x-x_i)$ for some $f_i$ supported in $B_R(0)$. Let $\vphi$ be a smooth function such that $|\vphi|$ is comparable to $1$ on $B_1(0)$ and $\hat{\vphi}$ is supported in $B_1(0)$.
Let $\vphi_R = \vphi(\cdot/R)$ and $\tilde{f}_i = f_i/\vphi_R$. We can write $f(x)$ as $\sum_i (\tilde{f_i} \vphi_R)(x-x_i)$. Let $e(t) = e^{-2\pi i t}$.
We claim that
\begin{equation} \label{eq:Tao}
\norm{ \sum_i e(x_i \cdot \xi) g_i *\widehat{ \vphi_R } |_S}_{L^q_\xi(d\sigma)} \les R^{1/q} \left(\sum_{i} \norm{ g_i }_{L^q(\R^d)}^q\right)^{1/q}
\end{equation}
for all $g_i \in L^q(\R^d)$ and $1 \leq q\leq 2$. 

Assume \eqref{eq:Tao} for the moment. Note that for $\xi \in S$,
\[  \hat{f}(\xi)  =  \sum_i e(x_i \cdot \xi) \widehat{\tilde{f_i}} *\widehat{ \vphi_R } (\xi)=\sum_i e(x_i \cdot \xi) [\widehat{\tilde{f_i}}|_{N_{R^{-1}} S}] *\widehat{ \vphi_R }(\xi) \] since $\widehat{ \vphi_R }$ is supported in $B_{R^{-1}}(0)$. Therefore, the proof is completed by applying \eqref{eq:Tao} with $g_i = \widehat{\tilde{f_i}} |_{N_{R^{-1}} S}$ followed by \eqref{eqn:nbd} and the embedding $l^p \hookrightarrow l^q$;
\[\left(\sum_i \norm{\tilde{f_i}}_{L^p(\R^d)}^q \right)^{1/q}  \les \left(\sum_i \norm{f_i}_{L^p(\R^d)}^q \right)^{1/q} \leq \left(\sum_i \norm{f_i}_{L^p(\R^d)}^p \right)^{1/p} = \norm{f}_{L^p(\R^d)}. \]

The estimate \eqref{eq:Tao} can be found in \cite{Tao} in a slightly different form. We give a proof for the convenience of the reader, incorporating a simplified $L^2$ estimate from \cite{BG}. It is enough to establish \eqref{eq:Tao} for $q=1$ and $q=2$ by interpolation. Consider the case $q=1$. Note that $|\widehat{\vphi_R}|\les R^d\chi_{B_{R^{-1}}(0)}$. This gives that for any $y\in \R^d$
\[ \int_S |\widehat{ \vphi_R }(\xi - y)| d\sigma(\xi) \les R^d | S \cap B_{R^{-1}}(y)| \les R. \] This finishes the proof for $q=1$ by the triangle inequality and Fubini's theorem.

When $q=2$, we shall prove \eqref{eq:Tao} with $\vphi$ replaced by $\eta$, where $\eta$ is a smooth function supported in $B_2(0)$. Then the original statement follows by writing $\vphi$ as a sum of compactly supported functions and using the rapid decay of $\vphi$ away from $B_2(0)$. Following \cite{BG}, we write $\norm{ \sum_i e(x_i \cdot \xi) g_i *\widehat{ \eta_R } |_S}_{L^2(d\sigma)}^2$ as
\begin{equation}\label{eqn:L2}
\sum_i \norm{\widehat{G_i} |_S}_{L^2(d\sigma)}^2+ \sum_{i\neq j} \int_S e((x_i-x_j)\cdot \xi) \widehat{G_i}(\xi) \overline{\widehat{G_j}(\xi)} d\sigma(\xi), 
\end{equation} 
where $G_i = \check{ g_i} \eta_R$.

We recall the standard $L^2$ estimate (see, for example, \cite[Lemma 3.2]{Guth2})
\[
\norm{\hat{f}|_S}_{L^2(d\sigma)} \les R^{1/2} \norm {f}_{L^2(B_R)}.
\]
Using this estimate and Plancherel's theorem, we bound the first term in \eqref{eqn:L2} by 
\[ R \sum_i \norm{G_i}_{L^2(\R^d)}^2 \les  R \sum_i \norm{g_i}_{L^2(\R^d)}^2.\]

The integral in the second term in \eqref{eqn:L2} is $\tilde{G_i}* \overline{G_j} *\widehat{d\sigma} (x_i-x_j)$, where $\tilde{G_i}(x) = {G_i(-x)}$. We use the decay of $\widehat{d\sigma}$ and the sparsity assumption together with the fact that $\tilde{G_i} * \overline{G_j}$ is supported in $B_{4R}(0)$ to obtain
\begin{align*}
|\tilde{G_i}* \overline{G_j} *\widehat{d\sigma} (x_i-x_j)| &\les |x_i-x_j|^{-\rho} \norm{G_i}_{L^1(\R^d)}\norm{G_j}_{L^1(\R^d)} \\
&\les  (RN)^{-C(d,\rho) \rho} R^d \norm{g_i}_{L^2(\R^d)}\norm{g_j}_{L^2(\R^d)}
\end{align*}
by the Cauchy-Schwarz inequality and Plancherel's theorem. Recall that $C(d,\rho) \rho= (d-1)$. Summing this over $i,j$ using Cauchy-Schwarz, we bound \eqref{eqn:L2} by
\[ R \sum_i \norm{g_i}_{L^2(\R^d)}^2 + (RN)^{-(d-1)} R^d N \sum_i \norm{g_i}_{L^2(\R^d)}^2 \les R \sum_i \norm{g_i}_{L^2(\R^d)}^2,\]
which completes the proof of \eqref{eq:Tao} for the case $q=2$. We remark that the proof, in fact, required a weaker sparseness condition $|x_i-x_j| \geq (RN^{1/(d-1)})^{C(d,\rho)}$ for $i\neq j$. 
\end{proof}


\end{document}